\newtheorem{Theorem}{Theorem}[section]
\newtheorem{Lemma}[Theorem]{Lemma}
\newtheorem{Prop}[Theorem]{Proposition}
\DeclareMathAlphabet\mathbfcal{OMS}{cmsy}{b}{n} %bold Calligraphics
\def\sfA{\mathsf{A}}
\def\sfB{\mathsf{B}}
\def\sfv{\mathsf{v}}
\def\Ext{\mathsf{Ext}}
\def\Surv{\mathsf{Surv}}
\def\N{\mathbb{N}}
\def\V{\mathbb{V}}
\def\P{\mathbb{P}}
\def\E{\mathbb{E}}
\def\T{\mathbb{T}}
\def\calT{\mathcal{T}}
\def\calbfT{\mathbfcal{T}}
\def\calZ{\mathcal{Z}}
\def\calL{\mathcal{L}}
\def\1{\mathds{1}}
\def\bfN{\mathbf{N}}
\def\eps{\varepsilon}
\def\calTproc{(\calT_{n})_{n\ge 0}}
\def\calTprocStar{(\calT_{n}^{*})_{n\ge 0}}
\def\calZproc{(\calZ_{n})_{n\ge 0}}
\def\bfTprocStar{(\calbfT_{n}^{*})_{n\ge 0}}
\def\Xfam{(X^{(\bullet,k)}_{i,\sfv})_{k\ge1,i\ge1,\sfv\in\V}}
\def\Tfam{(N_{\sfv})_{\sfv\in\V}}
\def\assBPRE{(Z_{n}')_{n\ge 0}}
\begin{document}

\title*{Branching within branching I: The extinction problem}
\titlerunning{Branching within branching I: The extinction problem}
\author{Gerold Alsmeyer and S\"oren Gr\"ottrup}
\institute{Gerold Alsmeyer and S\"oren Gr\"ottrup\at Inst.~Math.~Statistics, Department
of Mathematics and Computer Science, University of M\"unster,
Orl\'eans-Ring 10, D-48149 M\"unster, Germany.\at
\email{gerolda@math.uni-muenster.de, soeren.groettrup@gmail.com}}

\maketitle

\abstract{We consider a discrete-time host-parasite model for a population of cells which are colonized by proliferating parasites. The cell population grows like an ordinary Galton-Watson process, but in reflection of real biological settings the multiplication mechanisms of cells and parasites are allowed to obey some dependence structure. More precisely, the number of offspring produced by a mother cell determines the reproduction law of a parasite living in this cell and also the way the parasite offspring is shared into the daughter cells. In this article, we provide a formal introduction of this branching-within-branching model and then focus on the property of parasite extinction. We establish equivalent conditions for almost sure extinction of parasites, and find a strong relation of this event to the behavior of parasite multiplication along a randomly chosen cell line through the cell tree, which forms a branching process in random environment. In a second paper \cite{AlsGroettrup:15b}, the case when parasites survive is studied by proving limit results.}

\bigskip

{\noindent \textbf{AMS 2000 subject classifications:}
60J80 \ }

{\noindent \textbf{Keywords:} Host-parasite co-evolution, branching within branching, Galton-Watson process, random environment, infinite random cell line, extinction probability, ex\-tinction-explosion principle}

\section{Introduction}

The discrete-time \emph{branching-within-branching process (BwBP)} studied in this paper describes the evolution of generations of a population of cells containing proliferating parasites. 
In an informal way, its reproduction mechanism may be described as follows: 
\begin{description}[xxx]
\item[(1)] At time $n=0$ there is just one cell containing one parasite. 
\item[(2)] Cells and their hosted parasites within one generation form independent reproduction units which behave independently and in the same manner.
\item[(3)] Any cell splits into a random number $N$, say, of daughter cells in accordance with a probability distribution $(p_k)_{k\ge 0}$. 
\item[(4)] Then, given $N$, the hosted parasites, independently and in accordance with the same distribution, produce random numbers of offspring which are then shared into the daughter cells.
\item[(5)] All cells and parasites obtained from a cell and its parasites in generation $n$ belong to generation $n+1$.
\end{description}
We are thus dealing with a hierarchical model of two subpopulations, viz. cells and parasites, with an entangled reproduction mechanism. The hierarchy stems from the fact that cells can survive without parasites but not vice versa.

\vspace{.1cm}
Proceeding with a more formal introduction, let $\V$ denote the infinite Ulam-Harris tree with root $\varnothing$ and $N_{\sfv}$ the number of daughter cells of cell $\sf\sfv\in\V$. The $(N_{\sfv})_{\sf\sfv\in\V}$ are independent and identically distributed (iid) copies of the $\N_{0}$-valued random variable $N$ with distribution $(p_k)_{k\ge 0}$ and finite mean $\nu$, viz. $\P(N=k)=p_{k}$ for all $k\in\N_{0}$ and 
$$ \nu=\E N<\infty. $$
The cell population thus forms a standard \emph{Galton-Watson tree (GWT)} $\T=\bigcup_{n\in\N_{0}}\T_{n}$ with $\T_{0}=\{\varnothing\}$ and
\begin{equation*}
\T_{n} := \{\sfv_{1}\dots\sfv_{n}\in\V|\sfv_{1}\dots\sfv_{n-1}\in\T_{n-1}\ \text{and}\ 1\le\sfv_{n}\le N_{\sfv_{1}\dots\sfv_{n-1}}\}
\end{equation*}
$($using the common tree notation $\sfv_{1}...\sfv_{n}$ for $(\sfv_{1},...,\sfv_{n}))$.
Consequently, defining
\begin{equation}\label{Eq.calT}
\calT_{n}\ :=\ \#\T_{n}\ =\ \sum_{\sfv\in\T_{n-1}}N_{\sfv}
\end{equation}
as the \emph{number of cells in the $n^{th}$ generation} for $n\in\N_{0}$, the sequence $\calTproc$ forms a standard \emph{Galton-Watson process (GWP)} with reproduction law $(p_k)_{k\ge 0}$ and reproduction mean $\nu$. For basic information on Galton-Watson processes see \cite{Athreya+Ney:72, Jagers:75}.

\vspace{.1cm}
Let $Z_{\sfv}$ denote the number of parasites in cell $\sfv\in\V$ and $\T_{n}^{*}$ the set of contaminated cells in generation $n\in\N_{0}$ with cardinal number $\calT_{n}^{*}$, so
\begin{equation}\label{Eq.contCell}
\T_{n}^{*}\ :=\ \{\sfv\in\T_{n}:Z_{\sfv}>0\}\quad\text{and}\quad\calT_{n}^{*}\ :=\ \#\T_{n}^{*}.
\end{equation}
We define the \emph{number of parasites process} by
\begin{equation*}
 \calZ_{n}\ :=\ \sum_{\sfv\in \T_{n}}Z_{\sfv},\quad n\in\N_{0}.
\end{equation*}
After these settings, the BwBP is defined as the pair 
$$ \left(\T_{n},(Z_{\sfv})_{\sfv\in\T_{n}}\right)_{n\ge 0} $$ 
and now clearly seen as a description of the generations of any given population of cells and the number of parasites hosted by them.
 
 \vspace{.1cm}
As informally stated above, parasites at different cells are assumed to multiply independently of each other, whereas reproduction of parasites living in the same cell $\sfv$ is conditionally iid given the number of daughter cells of $\sfv$. More precisely, let for each $k\in\N$
\begin{equation*}
X^{(\bullet,k)}_{i,\sfv}\ :=\ \left(X^{(1,k)}_{i,\sfv},\dots,X^{(k,k)}_{i,\sfv}\right),\quad i\in\N,\ \sfv\in\V,
\end{equation*}
be iid copies of an $\N_{0}^{k}$-valued random vector $X^{(\bullet,k)}:=\left(X^{(1,k)},\dots,X^{(k,k)}\right)$. The families $\big(X^{(\bullet,k)}_{i,\sfv}\big)_{i\in\N,\sfv\in\V}$, $k\in\N$, are assumed to be mutually independent and also independent of $\Tfam$. They provide the numerical description of 
the reproduction and sharing of the parasites living in the cell tree. In detail, if the cell $\sfv\in\V$ has $k\in\N$ daughter cells $\sfv 1,...,\sfv k$, then $X^{(j,k)}_{i,\sfv}$, $1\le j\le k$, gives the number of progeny of the $i^{\,th}$ parasite in cell $\sfv$ which go in daughter cell $\sfv j$. The sum over all entries in $X^{(\bullet,k)}_{i,\sfv}$ gives the total offspring number of this parasite.
%Since the families $(X^{(\bullet,k)}_{i,\sfv})_{i\ge1,\sfv\in\V}$, $k\in\N$, are independent and each family consists of iid random variables, they fulfill all desired requirements for the multiplication behavior of the parasites. 
The number of parasites in the cells are thus recursively defined by $Z_\varnothing:=1$ and
\begin{equation}\label{Eq.parasitesIncell}
Z_{\sfv j}\ :=\ \sum_{k\ge j}\1_{\{N_{\sfv}=k\}}\sum_{i=1}^{Z_{\sfv}}X^{(j,k)}_{i,\sfv}\ =\  \sum_{i=1}^{Z_{\sfv}}X^{(j,N_{\sfv})}_{i,\sfv},\qquad j\in\N,
\end{equation}
where, by convention, $X^{(j,k)}_{i,\sfv}:=0$ if $j>k$. We further define
\begin{equation*}
\mu_{j,k}\ :=\ \E X^{(j,k)}%\quad\text{and}\quad\gamma(k):=\sum_{l=1}^{k}\mu_{l,k}
\end{equation*}
for $j,k\in\N$ and
\begin{equation*}
\gamma\ :=\ \E\calZ_{1}\ =\ \sum_{k\ge 1}p_k\sum_{j=1}^{k}\mu_{j,k}
\end{equation*}
as the mean number of offspring per parasite, which is assumed to be positive and finite, i.e.
\begin{equation}\tag{A1}\label{As.Gamma}
0<\gamma<\infty.
\end{equation}
This naturally implies $\mu_{j,k}<\infty$ for all $j\le k$, and $\P(N=0)<1$. To avoid trivial cases, we further assume that
\begin{equation}\tag{A2}\label{As.Constant1}
p_{1}=\P(N=1)<1\quad\text{and}\quad\P(\calZ_{1}=1)<1.
\end{equation}
If the first assumption fails, the cell tree is just a cell line and $(\calZ_{n})_{n\ge 0}$ a standard GWP with reproduction law $\calL(X^{(1,1)})$, while failure of the second assumption entails the number of parasites in each generation to be the same, thus $\calT_{n}^{*}=\calT^{*}_{0}$ a.s. for all $n\in\N_{0}$, or $\calT_{n}^{*}=\calZ_{0}$ eventually.
To rule out the simple case that every contaminated daughter cell contains only one parasite (as the root cell $\varnothing$), we further assume that %that there exists at least one combination $1\le u\le t<\infty$ such that $p_{t}\P(X^{(u,t)}\neq1)>0$, i.e.
\begin{equation}\tag{A3}\label{As.neq1}
p_{k}\,\P(X^{(j,k)}\ge 2)>0\quad\text{for at least one $(j,k),\ 1\le j\le k$}.
\end{equation}

\begin{figure}[t]
\centering
\includegraphics[width=11cm]{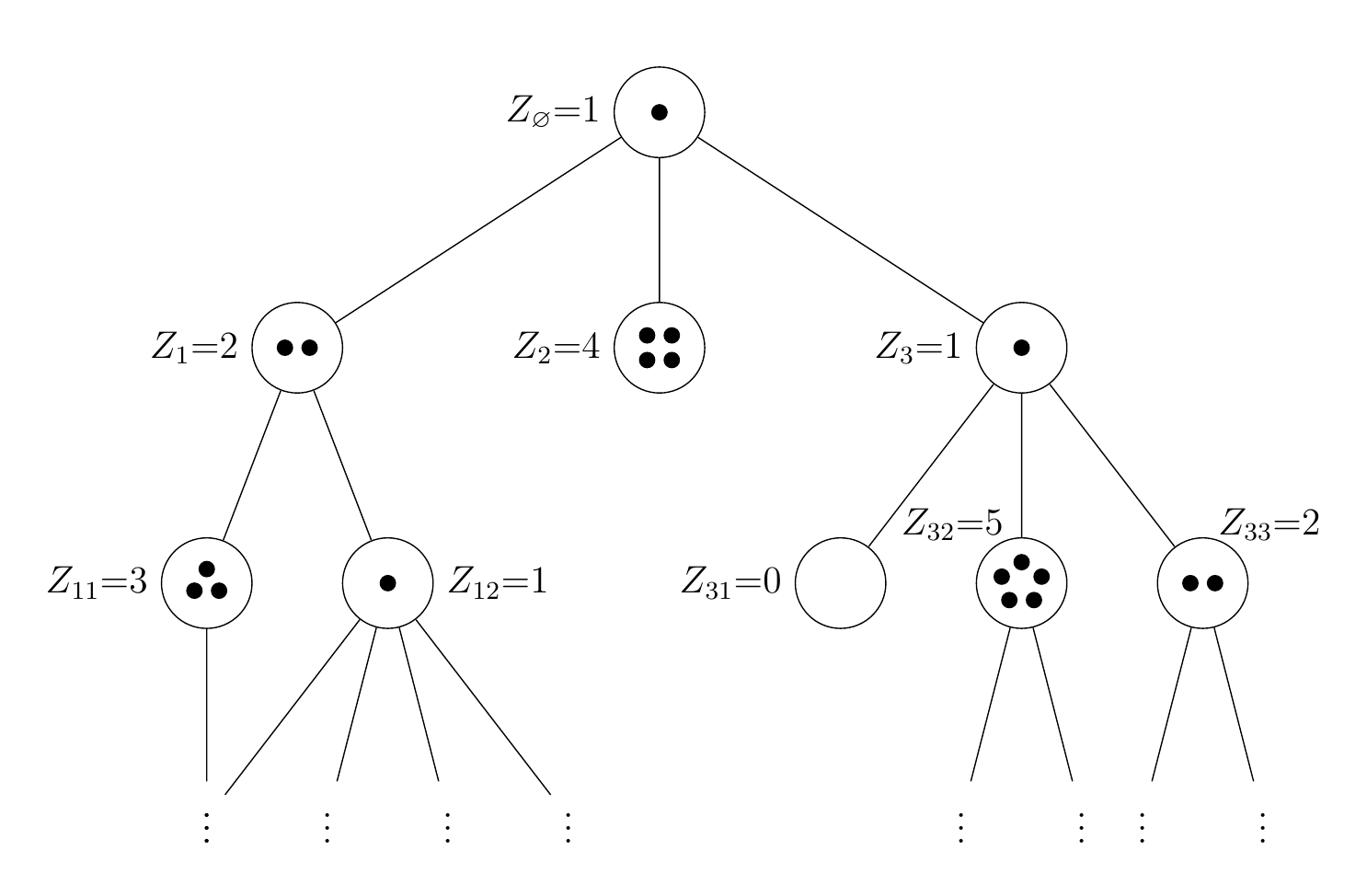}
\caption{A typical realization of the first three generations of a BwBP.}
\label{Fig.CellTree}
\end{figure}

%\vspace{.1cm}
For each generation $n=0,1,2,\ldots$, let
$$ \calbfT_{n} := \left(\calT_{n,0}, \calT_{n,1}, \calT_{n,2}, \dots\right) $$
denote the infinite vector \emph{of cell counts with a specific number of parasites}, so $\calT_{n,0}$ gives the number of non-infected cells and $\calT_{n,k}$ for $k\ge 1$ the number of cells with exactly $k$ parasites in generation $n$. Then $(\calbfT_{n})_{n\in\N_{0}}$ is a \emph{multi-type branching process (MTBP)} with countably many types. The individuals are the cells and the type of each cell is given by the number of parasites it contains. 

In situations where our BwBP initially has $s_{k}$ cells of type $k$ for $k\in\N_{0}$ and thus a total number $\sum_{i\ge 0}s_{i}$ of cells, we write $\P_{\vec{s}}$ for the underlying probability measure, where
\begin{equation}\label{def of bfN}
\vec{s}=(s_{0},s_{1},\dots)\in\bfN:=\big\{(x_{i})_{i\ge 0}\in\N_{0}^{\infty}|x_{i}>0\ \text{finitely often}\big\}.
\end{equation}
Further, we write $\P_{z}$, $z\in\N_{0}$ if the process starts with one cell and $z$ parasites, i.e.
\begin{equation*}
\P_{z}(\calT_{0}=1, Z_\varnothing=z)=1.
\end{equation*}
The corresponding expectations are denotes as usual by $\E_{\vec{s}}$ and $\E_{z}$, respectively. Indices are dropped in the standard case, viz. $\P=\P_{1}$ and $\E=\E_{1}$.

\vspace{.1cm}
MTBP's with finite type-space are well-studied with results transferred from the classical theory of GWP's, see \cite[Chapter V]{Athreya+Ney:72}, \cite[Chapter 4]{Jagers:75} or the monography by Mode \cite{Mode:71a}. If, on the other hand, the state space is infinite (countable or uncountable), a variety of behaviors may occur depending on the reproduction mechanism of types. For example, the branching random walk, an object of long standing interest \cite{AddarioReed:09, Aidekon:13, Biggins:77a, Biggins:77, Biggins:79, Biggins:92, BigKyp:97,HuShi:09,Kingman:75}, is obtained when type-space transitions are of independent additive kind. MTBP's are also studied in \cite{Athreya:00, BigKyp:04, Kesten:89, MenshikovVolkov:97, Moy:67, SagitovSerra:09,Serra:06}, and we refer to Kimmel and Axelrod \cite[Chapter 7]{Kimmel+Axelrod:02} for a series of examples with applications in biology.

\vspace{.1cm}
Various examples of branching-within-branching models have already been studied in the literature, the first one presumably by Kimmel \cite{Kimmel:97} who considered binary cell division in continuous time with symmetric sharing of parasites into the daughter cells. A discrete-time version of his model, and in fact a special case of ours, with possible asymmetric sharing of parasites was studied by Bansaye \cite{Bansaye:08} and later extended in \cite{Bansaye:09} by adding immigration of parasites and random environments. The latter means that parasites in a cell reproduce under the same but randomly chosen distribution. Subsequently, efforts have been made to generalize the underlying binary cell tree of the afore-mentioned models to arbitrary Galton-Watson trees in both, discrete and continuous time. The greatest progress in this direction has been achieved by Delmas and Marsalle in \cite{DelmasMar:10} and together with Bansaye and Tran in \cite{BanDelMarTran:11}. Both articles consider a random cell splitting mechanism and asymmetric sharing but make ergodic hypotheses that rule out the possibility of parasite extinction which is the focus in the present article. Let us further mention work by Guyon \cite{Guyon:07} on another discrete-time model with asymmetric sharing and by Bansaye and Tran \cite{BansayeTran:11} on a bifurcating cell-division model in continuous time with parasite evolution following a Feller diffusion. There, the cell division rates depend on the quantities of parasites inside the cells and asymmetric sharing of parasites into the two daughter cells is assumed. 

\vspace{.1cm}
Beside the model in \cite{Bansaye:08}, our model also comprises the one of type-$\sfA$ cells studied in \cite{AlsGroettrup:13}, where these cells produce daughter cells of either the same type $\sfA$ or of type $\sfB$, and the sharing mechanism of parasites in a type-$\sfA$ cell may depend on the number of type-$\sfA$ and type-$\sfB$ daughter cells. As another special case, we mention the situation of multinomial repartition of parasites. Here all parasites multiply independently in accordance with the same offspring distribution so that the number of parasites process $(\calZ_{n})_{n\ge 0}$ forms a standard GWP. After parasite reproduction, a cell divides into a random number $N$ of descendants, and the offspring of each of its hosted parasites chooses independently the $i^{\,th}$ daughter cell with probability $q_{i}(k)\in[0,1]$ if $N=k$. Thus,
\begin{equation}\label{eq:multinomial sharing}
 \sum_{j=1}^{k} X^{(j,k)}~\stackrel{d}{=}~X^{(1,1)}
\end{equation}
for all $k\in\N$, and given $\sum_{j=1}^{k}X^{(j,k)}=x$, the vector $(X^{(1,k)},\dots,X^{(k,k)})$ has a multinomial distribution with parameters $x$ and $q_{1}(k),\dots,q_{k}(k)\in[0,1]$. Replacing \eqref{eq:multinomial sharing} with
$$ X^{(j,k)}~\stackrel{d}{=}~X^{(1,1)}\quad\text{for all}\ 1\le j\le k< \infty, $$
we arrive at a case in which the number of offspring shared by a parasite into a daughter cell is always the same and thus independent of the number of such cells produced. This implies that along an infinite cell line the number of hosted parasites forms an ordinary GWP. As yet another specialization, one may consider the situation when
$$ X^{(j,k)} = 0\quad\text{a.s. for all } 2\le j\le k<\infty, $$
while $X^{(1,1)}, X^{(1,2)}, \dots$ are independent and positive-valued. Hence, starting with a single cell, parasites are only located in the leftmost cell $1_{n}=1...1$ $(n$ times$)$ in each generation $n$ and $\calZproc$ forms a \emph{GWP in random environment (GWPRE)}, with the number of daughter cells of each $1_{n}$ forming the (iid) random environment and hence determining the offspring distribution in each generation (see \cite{AthreyaKarlin:71a, Smith+Wilkinson:69} for the definition of a GWPRE).

\vspace{.1cm}
Regarding our model assumptions, let us note that it is not for pure mathematical generality when allowing cell division into more than two daughter cells. In the standard case, one of the two daughter cells after cell division may be viewed as the original mother cell which accumulates age-related damage throughout its replication phases and eventually loses the ability for cellular mitosis so that cell death occurs. This phenomenon, called cellular senescence, has been discovered recently even for several single-celled organisms (monads), see Stephens \cite{Stephens:05}. Genealogical aspects may be studied with the help of Galton-Watson trees when counting all cells stemming from a single cell during its lifetime and interpreting them as the succeeding generation.
Since the infection level of the mother cell changes during its lifetime, different numbers of parasites in the daughter cells are to be expected, thus justifying the assumption of asymmetric sharing of parasites. As another reason for this assumption is the fundamental biological mechanism to generate cell diversity, see Jan and Jan \cite{JanJan:98} and Hawkins and Garriga \cite{HawkinsGarriga:98}. For example, a stem cell uses asymmetric sharing to give rise to a copy of itself and a second daughter cell which is coded to differentiate into cells with a particular functionality in the organism.

\vspace{.1cm}
The purpose of this article is to establish equivalent conditions for almost sure extinction of parasites. The limiting behavior of the BwBP will be studied in a companion paper \cite{AlsGroettrup:15b}.

\section{The associated branching process in random environment}\label{Sec.ABPRE}

The first step towards an analysis of the BwBP is to identify a certain \emph{infinite random cell line} through the cell tree $\T$ and to study its properties. This approach was first used by Bansaye in \cite{Bansaye:08} who simply picked a random path in the infinite binary Ulam-Harris tree representing the cell population. Since the cell tree is here a general GWT and thus random, we must proceed in a different manner already introduced in \cite{AlsGroettrup:13}. In fact, we pick the random path $(V_{n})_{n\ge 0}$ in $\V$ according to a certain size-biased distribution instead of uniformly. 

\vspace{.1cm}
To give details, let $(C_{n}, T_{n})_{n\ge 0}$ be a sequence of iid random vectors independent of $\Tfam$ and $\Xfam$. The law of $T_{n}$ equals the size-biasing of the law of $N$, i.e.
\begin{equation*}
\P(T_{n}=k)=\frac{kp_k}{\nu}
\end{equation*}
for each $n\in\N_{0}$ and $k\in\N$, and
\begin{equation*}
 \P(C_{n}=l|T_{n}=k)=\frac{1}{k}
\end{equation*}
for $1\le l\le k$, which means that $C_{n}$ has a uniform distribution on $\{1,\dots,k\}$ given $T_{n}=k$. Now, $(V_{n})_{n\ge 0}$ is recursively defined by $V_{0}=\varnothing$ and
\begin{equation*}
 V_{n}:= V_{n-1} C_{n-1}
\end{equation*}
for $n\ge 1$. Then
\begin{equation*}
 \varnothing=:V_{0}\to V_{1}\to V_{2}\to\dots\to V_{n}\to\dots
\end{equation*}
provides us with a random cell line in $\V$ (not picked uniformly) as depicted in Figure \ref{Fig.SpinalCellTree}.
\begin{figure}[t]
\centering
\includegraphics[width=10cm]{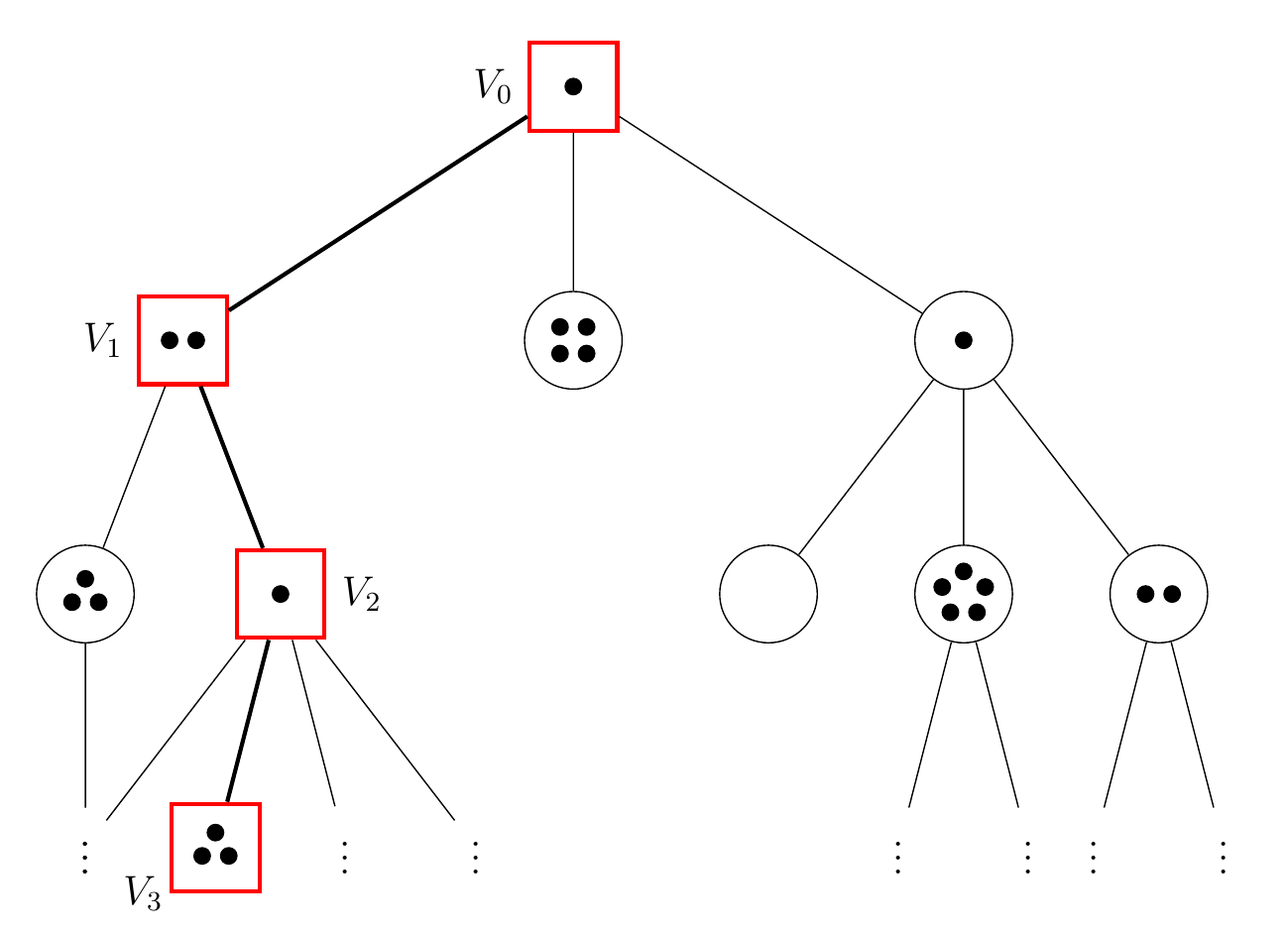}
\caption{Typical realization of a spine in the size-biased cell tree. A spinal cell is shown as a red square $\color{red}\square$, all other cells as a circle $\bigcirc$.}
\label{Fig.SpinalCellTree}
\end{figure}
The resulting path can be viewed as a so-called \emph{spine} in a size-biased tree, each spine cell representing a ``typical'' cell of its generation in the ordinary cell tree. The concept of size-biasing in the branching context goes back to Lyons et al. in \cite{LyPePe:95}, who used it to give alternative proofs of classical limit theorems for GWP's, and we refer to their work for a detailed construction of a spinal GWT.

\vspace{.1cm}
Concentrating on the number of parasites along $(V_{n})_{n\ge 0}$, we have $Z_{V_{0}}=Z_\varnothing$ and, recursively,
\begin{equation*}
Z_{V_{n+1}}\ =\ \sum_{k=1}^\infty\sum_{j=1}^{k}\1_{\{C_{n}=j,\,T_{n}=k\}}\sum_{i=1}^{Z_{V_{n}}}X_{i,V_{n}}^{(j,k)}\ =\ \sum_{i=1}^{Z_{V_{n}}}X_{i,V_{n}}^{(C_{n},T_{n})}.
\end{equation*}
for $n\ge 0$. Hence, given $(C_{n},T_{n})$, all parasites in generation $n$ produce offspring independently with the same distribution $\calL(X^{(C_{n},T_{n})})$. Since the $(C_{n}, T_{n})$ are iid and independent of $\Xfam$, the process of parasites along the spine $(Z_{V_{n}})_{n\ge 0}$ forms a \emph{branching process in iid random environment (BPRE)} as defined e.g. in \cite{AthreyaKarlin:71a, Smith+Wilkinson:69}. We summarize this observation in the following lemma (see also \cite[Subsection 2.1]{AlsGroettrup:13}).

\begin{Lemma}\label{lem:ABPRE}
Let $\assBPRE$ be a BPRE with $Z_\varnothing$ ancestors and iid environmental sequence $\Lambda:=(\Lambda_{n})_{n\ge 0}$ taking values in $\{\calL(X^{(j,k)})|1\le j\le k<\infty\}$ and such that
\begin{equation*}
\P\left(\Lambda_{0}=\calL(X^{(j,k)})\right)=\frac{p_{k}}{\nu}
\end{equation*}
for all $1\le j\le k<\infty$. Then $(Z_{V_{n}})_{n\ge 0}$ and $\assBPRE$ are equal in law.
\end{Lemma}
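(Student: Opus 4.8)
The plan is to verify the two defining properties of a branching process in i.i.d.\ random environment for $(Z_{V_{n}})_{n\ge 0}$ directly from the recursion
\[
Z_{V_{n+1}}\ =\ \sum_{i=1}^{Z_{V_{n}}}X_{i,V_{n}}^{(C_{n},T_{n})},
\]
and then identify the environmental law explicitly. Concretely, set $\Lambda_{n}:=\calL\big(X^{(C_{n},T_{n})}\big)$, which is a random element of the (countable) set $\{\calL(X^{(j,k)})\mid 1\le j\le k<\infty\}$, measurable with respect to $(C_{n},T_{n})$. First I would check the environmental sequence is i.i.d.\ with the claimed marginal: since $(C_{n},T_{n})_{n\ge0}$ are i.i.d., so are the $\Lambda_{n}$, and
\[
\P\big(\Lambda_{0}=\calL(X^{(j,k)})\big)\ =\ \P(C_{0}=j,\,T_{0}=k)\ =\ \P(C_{0}=j\mid T_{0}=k)\,\P(T_{0}=k)\ =\ \frac1k\cdot\frac{kp_{k}}{\nu}\ =\ \frac{p_{k}}{\nu},
\]
matching the hypothesis of the lemma. (Strictly one should note the harmless ambiguity that different pairs $(j,k)$ may give the same law $\calL(X^{(j,k)})$; then the displayed probabilities add up, but this is exactly how the statement is to be read, so no issue arises.)

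Next I would establish the conditional branching property: given the environment $\Lambda=(\Lambda_{n})_{n\ge0}$ — equivalently, given $(C_{n},T_{n})_{n\ge0}$ — the process $(Z_{V_{n}})_{n\ge0}$ is a time-inhomogeneous Markov chain in which, at step $n$, each of the $Z_{V_{n}}$ individuals independently produces offspring according to $\Lambda_{n}$. This follows because the families $\big(X^{(\bullet,k)}_{i,\sfv}\big)_{i,\sfv}$, $k\in\N$, are mutually independent, i.i.d.\ in $(i,\sfv)$, and jointly independent of $(N_{\sfv})_{\sfv\in\V}$ and of $(C_{n},T_{n})_{n\ge0}$. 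Fixing a realization of the spine $(V_{n})_{n\ge0}$ and of $(C_{n},T_{n})_{n\ge0}$, the summands $X_{i,V_{n}}^{(C_{n},T_{n})}$ appearing in the recursion for distinct $n$ involve distinct cells $V_{n}$ (the $V_{n}$ are pairwise different), hence distinct and independent blocks of the $X$-arrays; and for fixed $n$ the summands over $i=1,\dots,Z_{V_{n}}$ are i.i.d.\ copies of $X^{(C_{n},T_{n})}$, independent of $Z_{V_{n}}$ since the latter is built from $X$-variables attached to earlier spine cells. A short induction on $n$ then shows that $(Z_{V_{n}})_{n\ge0}$ conditioned on $(C_{n},T_{n})_{n\ge0}$ has exactly the transition structure of $\assBPRE$ conditioned on its environment, and since the environments agree in law by the previous paragraph, the two processes agree in law unconditionally.

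The only genuinely delicate point is the independence bookkeeping in the second step: one must be careful that $Z_{V_{n}}$ and the array $\big(X_{i,V_{n}}^{(j,k)}\big)_{i\ge1,\,1\le j\le k}$ used to generate $Z_{V_{n+1}}$ are independent given the environment, and that across generations the relevant $X$-blocks are disjoint. Both hinge on the fact that the spine cells $V_{0},V_{1},V_{2},\dots$ are distinct — so that $Z_{V_{n}}$, being a deterministic function of $(N_{\sfv})$, of $(C_{m},T_{m})_{m<n}$, and of $X$-variables indexed by $V_{0},\dots,V_{n-1}$, does not touch the block indexed by $V_{n}$ — together with the postulated mutual independence of the $X$-arrays across $k$ and their independence of $(N_{\sfv})$ and $(C_{n},T_{n})$. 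I would make this precise by conditioning on the whole sequence $(C_{n},T_{n})_{n\ge0}$ (which determines the spine and all the indices $(C_{n},T_{n})$), and on that event apply the standard Galton--Watson branching-property argument. Everything else is routine.
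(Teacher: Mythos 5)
Your proof is correct and follows essentially the same route as the paper: the key step in both is the identification $\P\left(\Lambda_{0}=\calL(X^{(j,k)})\right)=p_{k}/\nu=\tfrac1k\cdot\tfrac{kp_{k}}{\nu}=\P(C_{0}=j,\,T_{0}=k)$, i.e.\ that both processes choose their reproduction law by the same random mechanism. The conditional branching property along the spine, which you spell out carefully (distinct spine cells, independent $X$-blocks), is exactly what the paper treats as already established in the derivation of the recursion $Z_{V_{n+1}}=\sum_{i=1}^{Z_{V_{n}}}X_{i,V_{n}}^{(C_{n},T_{n})}$ preceding the lemma, so your extra bookkeeping is sound but not a different argument.
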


\begin{proof}
It suffices to point out that, by assumption, $Z^\prime_{0}=Z_\varnothing$ a.s. and
\begin{equation*}
\P\left(\Lambda_{0}=\calL(X^{(j,k)})\right)\ =\ \frac{p_{k}}{\nu}\ =\ \frac{1}{k}\cdot\frac{kp_{k}}{\nu}\ =\ \P\left(C_{0}=j,T_{0}=k\right)
\end{equation*}
for all $1\le j\le k<\infty$, i.e., both processes pick their reproduction law in each generation by the same random mechanism.\qed
\end{proof}

The BPRE $\assBPRE$ with environmental sequence $\Lambda$ is called hereafter the \emph{associated branching process in random environment (ABPRE)}. It is one of the major tools used in the study of the BwBP, and the following proposition provides a key relation between this process and its ABPRE.

\begin{Prop}\label{Prop.ABPRE.T}
For all $n,k,z\in\N_{0}$,
\begin{equation}\label{Eq.BPRE.EG}
\P_{z}\left(Z_{n}'=k\right)=\nu^{-n}\,\E_{z}\calT_{n,k}\quad\text{and}\quad\P_{z}\left(Z_{n}'>0\right)=\nu^{-n}\,\E_{z}\calT_{n}^{*}.
\end{equation}
%in particular
%\begin{equation}\label{Eq.BPRE.EG}
%\P_{z}\left(Z_{n}'>0\right)=\nu^{-n}\,\E_{z}\calT_{n}^{*}.
%\end{equation}
\end{Prop}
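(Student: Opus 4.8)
The plan is to prove the first identity by induction on $n$, and then to derive the second one by summing over $k\ge 1$. Fix $z\in\N_{0}$ and start the BwBP from a single cell carrying $z$ parasites. For $n=0$ both sides of the first equation in \eqref{Eq.BPRE.EG} are $\1_{\{k=z\}}$, since under $\P_{z}$ we have $\calT_{0,k}=\1_{\{k=z\}}$ deterministically and $Z_{0}'=Z_\varnothing=z$ a.s. by Lemma \ref{lem:ABPRE}. This settles the base case.

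For the inductive step, I would condition on the first generation of the cell tree together with the numbers of parasites placed in the daughter cells. Write $\calT_{n+1,k}=\sum_{j=1}^{N_\varnothing}\calT_{n,k}^{(j)}$, where $\calT_{n,k}^{(j)}$ counts the type-$k$ cells in generation $n$ of the subtree rooted at the $j$-th daughter cell $\sfv j$; given $N_\varnothing=m$ and the parasite counts $Z_{1},\dots,Z_{m}$, these $m$ subtrees are independent BwBP's, the $j$-th one started from a single cell with $Z_{j}$ parasites, so by the branching property and the induction hypothesis $\E_{z}\big[\calT_{n,k}^{(j)}\mid N_\varnothing,\,(Z_{i})_{i}\big]=\nu^{n}\,\P_{Z_{j}}(Z_{n}'=k)$. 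Summing over $j$, taking expectations, and using \eqref{Eq.parasitesIncell} for the joint law of $(N_\varnothing,Z_{1},\dots,Z_{N_\varnothing})$, one gets
\begin{equation*}
\E_{z}\calT_{n+1,k}\ =\ \nu^{n}\sum_{m\ge 1}p_{m}\sum_{j=1}^{m}\E_{z}\,\P_{Z_{j}}(Z_{n}'=k),
\end{equation*}
where $Z_{j}=\sum_{i=1}^{z}X^{(j,m)}_{i,\varnothing}$ has law $\calL\big(\sum_{i=1}^{z}X^{(j,m)}_{i}\big)$. On the other side, run the ABPRE from $z$ ancestors: conditioning on the first environment $\Lambda_{0}$, which by Lemma \ref{lem:ABPRE} equals $\calL(X^{(j,m)})$ with probability $p_{m}/\nu$, the number of individuals after one step is distributed as $\sum_{i=1}^{z}X^{(j,m)}_{i}$, and then $\P_{z}(Z_{n+1}'=k)=\sum_{m\ge 1}\sum_{j=1}^{m}\frac{p_{m}}{\nu}\,\E\,\P_{\sum_{i=1}^{z}X^{(j,m)}_{i}}(Z_{n}'=k)$ by the Markov property of the BPRE in its environment. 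Matching the two expressions gives exactly $\E_{z}\calT_{n+1,k}=\nu^{n+1}\,\P_{z}(Z_{n+1}'=k)$, completing the induction.

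Finally, summing the first identity over $k\ge 1$ and using $\calT_{n}^{*}=\sum_{k\ge 1}\calT_{n,k}$ together with $\{Z_{n}'>0\}=\bigsqcup_{k\ge 1}\{Z_{n}'=k\}$ — the interchange of sum and expectation being justified by monotone convergence — yields $\P_{z}(Z_{n}'>0)=\nu^{-n}\E_{z}\calT_{n}^{*}$. The only point requiring a little care is the bookkeeping in the inductive step: one must make sure that the random reproduction law used by the ABPRE in its first step (size-biased choice of $m$, uniform choice of $j$) is correctly paired with the law of the parasite vector $(Z_{1},\dots,Z_{m})$ produced in the real cell tree, i.e. that the factor $1/m$ from the uniform choice of the spinal daughter cell is precisely what converts the size-biased weight $mp_{m}/\nu$ back into $p_{m}$ — this is the content of Lemma \ref{lem:ABPRE} and is where the whole identity comes from. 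Integrability is not an issue since $\E_{z}\calT_{n,k}\le\E_{z}\calT_{n}=\nu^{n}z<\infty$ under \eqref{As.Gamma}.
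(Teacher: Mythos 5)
Your argument is correct, but it follows a genuinely different route from the paper. The paper never uses induction: it works directly with the random cell line $(V_{n})_{n\ge 0}$, computing for each fixed vertex $\sfv$ with $|\sfv|=n$ that $\P_{z}(Z_{\sfv}=k)=\nu^{n}\,\P_{z}(Z_{\sfv}=k,\,V_{n}=\sfv)$ (because the spine picks the path to $\sfv$ with probability $\nu^{-n}\prod_{i}p_{k_{i}}\big/\prod_{i}p_{k_{i}}$-type reweighting, and the spine selection is independent of parasite reproduction), then sums over all $|\sfv|=n$ to identify $\sum_{|\sfv|=n}\P_{z}(Z_{\sfv}=k)=\E_{z}\calT_{n,k}$ with $\nu^{n}\P_{z}(Z_{V_{n}}=k)$, and finally invokes Lemma \ref{lem:ABPRE} to replace $Z_{V_{n}}$ by $Z_{n}'$. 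Your proof instead decomposes over the first cell generation and inducts on $n$, matching the one-step transition of the BwBP (weight $p_{m}$, daughter-wise parasite counts $Z_{j}=\sum_{i=1}^{z}X^{(j,m)}_{i,\varnothing}$) against the one-step transition of the ABPRE (weight $p_{m}/\nu$ per pair $(j,m)$); this bypasses the spine construction entirely and only uses the ABPRE through its defining environment law, at the cost of having to carry the induction hypothesis uniformly in the initial parasite number (needed because you apply it with the random value $Z_{j}$) — which you do implicitly and legitimately. Two cosmetic points: conditional independence of the daughter subtrees is not actually needed (you only take expectations of a sum, so the marginal law of each $Z_{j}$ suffices), and the integrability bound should read $\E_{z}\calT_{n,k}\le\E_{z}\calT_{n}=\nu^{n}$ (the number of cells does not depend on $z$, and its finiteness comes from $\nu=\E N<\infty$ rather than from \eqref{As.Gamma}); neither affects the validity of the argument.
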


\begin{proof}
By definition of the involved random variables, we find that
\begin{align*}
\P_{z}(N_{\sfv|0}=k_{0},\dots, N_{\sfv|n-1}=k_{n-1})\ &=\ \prod_{i=0}^{n-1}p_{k_i}\\
&=\ \nu^{n}\,\P_{z}(V_{n}=\sfv,\ T_{0}=k_{0},\dots, T_{n-1}=k_{n-1})
\end{align*}
and
\begin{align*}
\P_{z}(Z_{\sfv}=k|N_{\sfv|0}=k_{0},&\dots, N_{\sfv|n-1}=k_{n-1})\\
&=\ \P_{z}(Z_{\sfv}=k|V_{n}=\sfv,\ T_{0}=k_{0},\dots, T_{n-1}=k_{n-1})
\end{align*}
for all $n,k\in\N$, vertices $\sfv=\sfv_{1}\dots\sfv_{n}$ and $k_{0}\ge\sfv_{1},\dots, k_{n-1}\ge\sfv_{n}$, where $\sfv|0:=\varnothing$ and $\sfv|j:=\sfv_{1}\ldots\sfv_{j}$ for $1\le j\le n=|\sfv|$. Hence it follows by summation over the $k_{i}$ that  
$$  \P_{z}(Z_{\sfv}=k)\ =\ \nu^{n}\,\P_{z}(Z_{\sfv}=k, V_{n}=\sfv)$$
for all $\sfv\in\V$ with $|\sfv|=n$, which in turn leads to
\begin{align*}
\nu^{n}\,\P_{z}(Z_{V_{n}}=k)\ &=\ \sum_{|\sfv|=n}\nu^{n}\,\P_{z}(Z_{\sfv}=k, V_{n}=\sfv)\\
&=\ \sum_{|\sfv|=n}\P_{z}(Z_{\sfv}=k)\ =\ \sum_{|\sfv|=n}\E_{z}\1_{\{Z_{\sfv}=k\}}\ =\ \E_{z}\calT_{n,k}
\end{align*}
and proves the first assertion in view of Lemma \ref{lem:ABPRE}. The second equation then follows by summation over all $k>0$.
\end{proof}

We finish this section with a quick review of some relevant facts about the BPRE, relevant references being \cite{AfaGKV:05b, AfaGKV:05,Agresti:75, AthreyaKarlin:71b, AthreyaKarlin:71a,Bansaye:09b,Coffey+Tanny:84,GeKeVa:03,Smith+Wilkinson:69, Tanny:77, Tanny:77/78, Tanny:88}. For $n\in\N$ and $s\in [0,1]$,
\begin{equation*}
\E(s^{Z_{n}'}|\Lambda)\ =\ g_{\Lambda_{0}}\circ...\circ g_{\Lambda_{n-1}}(s)
\end{equation*}
is the quenched generating function of $Z_{n}'$ with iid $g_{\Lambda_{n}}$ and $g_{\lambda}$ defined by
\begin{equation*}
g_{\lambda}(s)\ :=\ \E(s^{Z^\prime_{1}}|\Lambda_{0}\ =\ \lambda)=\sum_{n\ge 0}\lambda_{n}s^{n}
\end{equation*}
for any distribution $\lambda=(\lambda_{n})_{n\ge 0}$ on $\N_{0}$. Moreover,
\begin{equation}\label{Eq.EW.BPRE}
\E g_{\Lambda_{0}}'(1)\ =\ \E Z_{1}'\ =\ \sum_{1\le j\le k}\frac{p_{k}}{\nu}\ \E X^{(j,k)}\ =\ \frac{\E\calZ_{1}}{\nu}\ =\ \frac{\gamma}{\nu}\ <\ \infty,
\end{equation}
where $\gamma = \E\calZ_{1}$. 
%As a consequence,
%\begin{align*}
%	\E Z_{n}'=f_{n}'(1)=\prod_{k=0}^{n-1}\E g_{\Lambda_{k}}'(1)=\left(\frac{\gamma}{\nu}\right)^{n}
%\end{align*}
%for each $n\in\N$. 
It is also well-known that $\assBPRE$ survives with positive probability iff
\begin{align}\label{Eq.BPRE.Aussterben}
\E\log g_{\Lambda_{0}}'(1)> 0\quad\text{and}\quad\E\log^{-}(1-g_{\Lambda_0}(0))<\infty,
\end{align}
see e.g. \cite{AthreyaKarlin:71a, Smith+Wilkinson:69}, and recall that $\gamma<\infty$ is assumed by \eqref{As.Gamma}. Furthermore, by \eqref{As.neq1}, there exists $1\le j\le k<\infty$ such that $p_{k}>0$ and $\P(X^{(j,k)}\neq1)>0$, which ensures that $\Lambda_{0}\ne\delta_{1}$ with positive probability. The ABPRE is called \emph{supercritical, critical} or \emph{subcritical} if $\E\log g_{\Lambda_{0}}'(1)>0$, $=0$ or $<0$, respectively. The subcritical case further divides into the three subregimes when $\E g_{\Lambda_{0}}'(1)\log g_{\Lambda_{0}}'(1)<0, =0$, or $>0$, respectively, called \emph{strongly, intermediate} and \emph{weakly subcritical case}. The quite different behavior of the process in each of the three subregimes is shown by the limit results derived in \cite{GeKeVa:03}.

\vspace{.1cm}
Focussing on the subcritical case hereafter, thus $\E\log g_{\Lambda_{0}}'(1)<0$, we point out the following useful facts. If $\E g_{\Lambda_{0}}'(1)\log g_{\Lambda_{0}}'(1)\le  0$, then the convexity of $\theta\mapsto\E g_{\Lambda_{0}}'(1)^{\theta}$ implies that
\begin{equation*}
\E g_{\Lambda_{0}}'(1)\ =\ \inf_{0\le \theta\le 1}\E g_{\Lambda_{0}}'(1)^{\theta}.
\end{equation*}
Under the assumptions
\begin{equation*}\tag{AsGe}\label{As.GeigerSubcriticalLimit}
 \P(Z^{\prime}_{1}\le  C)=1\quad \text{and}\quad \P\left(0<g_{\Lambda_{0}}'(1)<\eps\right)=0
\end{equation*}
for suitable constants $C>0$ and $\eps>0$, Geiger et al. \cite[Thms. 1.1--1.3]{GeKeVa:03} showed that
\begin{equation}\label{Eq.GeigerSurvivalEstimate}
\P(Z_{n}^\prime>0)\ \simeq\ cn^{-\kappa}\left(\inf_{0\le \theta\le  1}\E g_{\Lambda_{0}}'(1)^{\theta}\right)^{n}\quad\text{as }n\to\infty
\end{equation}
for some $c\in (0,\infty)$, where
\begin{equation*}
\kappa\ =\ 
\begin{cases}
0 & \text{if $\E g_{\Lambda_{0}}'(1)\log g_{\Lambda_{0}}'(1)<0$}\quad\text{(strongly subcritical case)},\\[1ex]
\frac{1}{2}& \text{if $\E g_{\Lambda_{0}}'(1)\log g_{\Lambda_{0}}'(1)=0$}\quad\text{(intermediate subcritical case)},\\[1ex]
\frac{3}{2}& \text{if $\E g_{\Lambda_{0}}'(1)\log g_{\Lambda_{0}}'(1)>0$}\quad\text{(weakly subcritical case)}.
\end{cases}
\end{equation*}
The condition \eqref{As.GeigerSubcriticalLimit} can be strongly relaxed for the asymptotic relation of the survival probability in \eqref{Eq.GeigerSurvivalEstimate} to hold, see for example \cite{GeKeVa:03, Vatutin:04}, but is enough for our purposes.

A combination of \eqref{Eq.BPRE.EG} and \eqref{Eq.GeigerSurvivalEstimate} provides us with the asymptotic relation
\begin{equation}\label{Eq.TnAndGeiger}
\E\calT_{n}^{*}\ \simeq\ cn^{-\kappa}\nu^{n}\left(\inf_{0\le \theta\le 1}\E g_{\Lambda_{0}}'(1)^{\theta}\right)^{n}\quad\text{as }n\to\infty,
\end{equation}
in particular (with \eqref{As.GeigerSubcriticalLimit} still in force)
\begin{equation}\label{Eq.TnAndGeiger2}
\inf_{0\le\theta\le  1}\E g_{\Lambda_{0}}'(1)^{\theta}\ \le\ \frac{1}{\nu}\quad\text{if}\quad\sup_{n\ge 1}\E\calT_{n}^{*}<\infty.
\end{equation}

\section{The extinction problem}\label{sec:extinction}

The trivial fact that non-contaminated cells are unable to produce infected daughter cells shows that the process
$$ \calbfT_{n}^{*}\ :=\ \left(\calT_{n,1}, \calT_{n,2}, \dots\right)$$
also forms a MTBP with type space $\{1,2,3,\dots\}$. As a consequence, the following \emph{extinction-explosion principle} is easily obtained for our model. 

\begin{Theorem}[Extinction-explosion principle]\label{Ext-Exp-Princ}
The parasite population of a BwBP either dies out or explodes, i.e. for all $\vec{s}\in\bfN$
\begin{equation*}
\P_{\vec{s}}(\calZ_{n}\to 0)\ +\ \P_{\vec{s}}(\calZ_{n}\to\infty)\ =\ 1. 
\end{equation*}
\end{Theorem}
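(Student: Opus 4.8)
The plan is to show that the total parasite count $\calZ_n$, restricted to its non-absorbing behaviour, cannot oscillate: it either converges to $0$ or tends to $+\infty$. Since $\calZ_n$ takes values in $\N_0$ and $0$ is absorbing for the parasite population (a cell with no parasites produces no infected daughter cells, and more importantly $\calZ_n=0$ forces $\calZ_{n+1}=0$), the event $\{\calZ_n\to 0\}$ coincides with $\{\calZ_n=0\text{ eventually}\}=\bigcup_n\{\calZ_n=0\}$. Thus it suffices to prove that on the complementary event $\{\calZ_n\ge 1\text{ for all }n\}$ we have $\calZ_n\to\infty$ almost surely. Equivalently, I would fix an arbitrary level $M\in\N$ and show $\P_{\vec s}(1\le\calZ_n\le M\text{ infinitely often})=0$.

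The key mechanism is a standard ``escape from bounded sets'' argument for the MTBP $\calbfT_n^{*}=(\calT_{n,1},\calT_{n,2},\dots)$ with type space $\{1,2,\dots\}$, exploiting the Markov/branching structure together with assumption \eqref{As.neq1}. First I would observe that whenever $1\le\calZ_n\le M$, the configuration of infected cells in generation $n$ lies in a \emph{finite} set of possibilities as far as the relevant sub-configuration is concerned: there are at most $M$ infected cells, each hosting between $1$ and $M$ parasites. Next, by \eqref{As.neq1} there exist $j\le k$ with $p_k\,\P(X^{(j,k)}\ge 2)>0$; iterating the one-cell reproduction, from any single infected cell there is a strictly positive probability $\delta>0$, uniform over the finitely many starting configurations with $1\le\calZ\le M$, that after a fixed number $r$ of generations the total number of parasites exceeds $M$ (for instance, one parasite lineage along a suitable cell line can be made to multiply past level $M$ in boundedly many steps). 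Hence, letting $\tau_1<\tau_2<\dots$ be the successive (stopping) times at which $1\le\calZ_n\le M$, the strong Markov property at each $\tau_i$ (using the branching property, which makes the future depend only on the current cell configuration) gives $\P_{\vec s}(\calZ_{\tau_i+r}>M\mid\calF_{\tau_i})\ge\delta$ on $\{\tau_i<\infty\}$. A Borel--Cantelli / Lévy-type argument then forces only finitely many of the $\tau_i$ to occur, i.e. $\calZ_n$ eventually leaves $\{1,\dots,M\}$ and never returns. Since $M$ was arbitrary, $\calZ_n\to\infty$ on $\{\calZ_n\ge 1\text{ for all }n\}$, and combined with $\{\calZ_n\to 0\}=\bigcup_n\{\calZ_n=0\}$ this yields the claim.

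An alternative, perhaps cleaner route I would also consider: pass to $\calbfT_n^{*}$, note that $0$ (the empty infected-cell configuration) is the only absorbing state reachable from bounded sets, and invoke the general fact that an irreducible-on-its-support branching-type Markov chain on a countable state space, for which every bounded set is ``transient'' in the above quantitative sense, must converge to $0$ or to infinity. The quantitative transience of bounded sets is exactly what \eqref{As.neq1} (together with $p_k>0$ for the relevant $k$) buys us, because it prevents the process from being ``trapped'' cycling among configurations with a bounded number of parasites.

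The main obstacle is making the uniform lower bound $\delta>0$ genuinely uniform over all reachable configurations with $1\le\calZ_n\le M$, and correctly handling the fact that the state here is not just the integer $\calZ_n$ but the full labelled configuration of infected cells in the (random) Galton--Watson tree. The resolution is that only the \emph{multiset} of parasite-counts among infected cells matters for the law of the future (by the branching property and exchangeability), and there are finitely many such multisets with total mass in $\{1,\dots,M\}$, so a finite minimum of strictly positive probabilities is again strictly positive. Care is also needed because a ``bad'' excursion could in principle drive $\calZ_n$ back down into $\{1,\dots,M\}$ from above; this is exactly why the argument is phrased via the return times $\tau_i$ and a conditional lower bound on escaping \emph{again}, rather than a one-shot estimate.
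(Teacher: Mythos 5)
Your overall frame---reduce to showing $\P_{\vec s}(1\le\calZ_{n}\le M\ \text{infinitely often})=0$ for every $M$, using that only finitely many infected-cell configurations (multisets) carry total parasite mass in $\{1,\dots,M\}$---is the same as the paper's, which proves that every state of the Markov chain $\calbfT_{n}^{*}=(\calT_{n,1},\calT_{n,2},\dots)$ with at least one parasite is transient. However, your key estimate does not close the argument. You bound $\P(\calZ_{\tau_{i}+r}>M\mid\calF_{\tau_{i}})\ge\delta$ on $\{\tau_{i}<\infty\}$ via \eqref{As.neq1} (a parasite lineage doubling past $M$ along a fixed cell line), and then assert that a Borel--Cantelli/L\'evy argument forces finitely many $\tau_{i}$. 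What the conditional Borel--Cantelli lemma actually yields from this bound is that, almost surely on $\{1\le\calZ_{n}\le M\ \text{i.o.}\}$, also $\{\calZ_{n}>M\ \text{i.o.}\}$ holds---which is no contradiction: as far as this estimate is concerned, the process may cross the level $M$ in both directions infinitely often. To run the geometric-trials iteration at the return times you need the escape event to be $\{\tau_{i+1}=\infty\}$, i.e.\ a uniform lower bound $\P_{\vec s}(\calZ_{n}\notin\{1,\dots,M\}\ \text{for all}\ n\ge 1)\ge\delta>0$ over the finitely many relevant configurations $\vec s$; merely exceeding $M$ after $r$ steps does not prevent a later return, and your closing remark that phrasing things via return times handles down-crossings does not supply this missing bound.

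The missing ingredient is exactly the case analysis in the paper's proof. If $\P(N=0)>0$, then from any configuration all infected cells die out in one step with positive probability; if $\P(N=0)=0$ but $\P(\calZ_{1}=0)>0$, then all parasites die in one step with positive probability; in either case the chain never revisits the set $\{1\le\calZ\le M\}$, with probability bounded below over the finitely many starting configurations. In the remaining case $\P(N=0)=\P(\calZ_{1}=0)=0$, the total count $\calZ_{n}$ is non-decreasing, and by \eqref{As.Constant1} it strictly increases in one step with positive probability, after which monotonicity forbids any return. Note that your growth route via \eqref{As.neq1} proves ``never return'' only in this monotone case; when deaths are possible and, say, $\P(\Surv)=0$, the process almost surely comes back down after exceeding $M$ unless it annihilates, so positivity of the no-return probability must come from the death route rather than from growth. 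Incorporating this dichotomy (or otherwise proving $\P_{\vec s}(\tau_{1}=\infty)\ge\delta$ directly) repairs your proof; your other reductions (absorption of $0$ for the parasite population, reduction to multisets via the branching property, finiteness of the set of configurations with $\calZ\le M$) are correct and essentially identical to the paper's.
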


\begin{proof}
Let $\vec{s}\in\bfN$ with $s_{i}>0$ for at least one $i\ge 1$. We prove that $\vec{s}$ is a transient state for the Markov chain $\bfTprocStar$. Consider three cases: If $\P(N=0)>1$, then 
$$ \P\left(\calbfT_{n}^{*}\neq\vec{s}\ \text{for all}\ n\ge1|\calbfT_{0}^{*}=\vec{s}\right)~\ge~\P(N=0)^{\sum_{i}s_{i}}\ >\ 0. $$
Otherwise, if $\P(N=0)=0$ but $\P(\calZ_{1}=0)>0$, then there exists a $k\in\N$ such that $p_{k}\,\P(\sum_{j=1}^{k}X^{(j,k)}=0)>0$ and thus
$$ \P\left(\calbfT_{n}^{*}\neq\vec{s}\ \text{for all}\ n\ge1|\calbfT_{0}^{*}=\vec{s}\right)~\ge~p_{k}^{\sum_{i}s_{i}}\P\left(\sum_{j=1}^{k}X^{(j,k)}=0\right)^{\sum_{i}i s_{i}}\ >\ 0.$$
Finally, consider the case when $\P(N=0)=0$ and $\P(\calZ_{1}=0)=0$. Recalling \eqref{As.Constant1}, we then have $\P(\calZ_{1}>1)>0$ and thus $p_{k}\,\P(\sum_{j=1}^{k}X^{(j,k)}>1)>0$ for some $k\in\N$. But this implies
$$ \P\left(\calbfT_{n}^{*}\neq\vec{s}\ \text{for all}\ n\ge 1|\calbfT_{0}^{*}=\vec{s}\right)\ \ge\ p_{k}\,\P\left(\sum_{j=1}^{k}X^{(j,k)}>1\right)\ >\ 0, $$
that is, $\vec{s}$ is a transient state of the Markov chain $\bfTprocStar$. Consequently, we infer that almost surely
$$\sum_{i\ge1}\calT_{n,i}\to 0,\quad\sum_{i\ge1}\calT_{n,i}\to\infty\quad\text{or, a fortiori,}\quad \calT_{n,i}\to\infty\ \text{for at least one } i\ge 1 $$
as $n\to\infty$. In any case, the parasite population a.s. dies out or tends to infinity.\qed
\end{proof}

We denote by
\begin{equation*}
\Ext:=\{\calZ_{n}\to 0\}\quad\text{and}\quad\Surv:=\Ext^c=\{\calZ_{n}\to\infty\}
\end{equation*}
the \emph{event of extinction} and \emph{of survival of parasites}, respectively, and put
\begin{equation*}
\P^*_{\vec{s}}:=\P_{\vec{s}}(\cdot|\Surv)\quad\text{and}\quad\E^*_{\vec{s}}:=\E_{\vec{s}}(\cdot|\Surv),
\end{equation*}
for $\vec{s}\in \bfN$. Also, let $\P^*_{z}$ and $\E^*_{z}$ for $z\in\N$ have the obvious meaning.

\vspace{.2cm}
Having shown that $\calZproc$ satsifies the extinction-explosion dichotomy, we will now turn to the process of contaminated cells and prove that, ruling out a degenerate case, survival of parasites always goes along with the number of contaminated cells tending to infinity. In other words, $\calT_{n}^{*}$ tends to infinity as $n\to\infty$ if this holds true for $\calZ_{n}$. The degenerate case occurs if all parasites sitting in the same cell send their offspring to the same daughter cell which formally means that, for each $k\ge 1$,
$$ X^{(\bullet,k)}\ =\ (0,\ldots,0,X^{(j(k),k)},0,\ldots,0) $$
for some unique $j(k)\in\{1,...,k\}$ or, equivalently,
$$ \P_{2}(\calT_{1}^{*}\ge 2)\ =\ 0. $$
Note that it is enough to consider a single root cell due to the branching property.

\begin{Theorem}\label{Th.ExplosionInfZellen} 
Let $\P(\Surv)>0$.
\begin{description}[(b)]
\item[(a)] If $\P_{2}(\calT_{1}^{*}\ge 2)>0$, then $\P^*_{z}(\calT_{n}^{*}\to\infty)=1$ and thus $\Ext=\{\sup_{n\ge 0}\calT_{n}^{*}<\infty\}$ $\P_{z}$-a.s. for all $z\in\N$.
\item[(b)] If $\P_{2}(\calT_{1}^{*}\ge 2)=0$, then $\P^*_{z}(\calT_{n}^{*}=1\text{ f.a. }n\ge 0)=1$ for all $z\in\N$.
\end{description}
\end{Theorem}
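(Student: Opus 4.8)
\emph{Part (b).} The plan is to first show that the degeneracy $\P_{2}(\calT_{1}^{*}\ge 2)=0$ forces, for every $k$ with $p_{k}>0$, the vector $X^{(\bullet,k)}$ to be almost surely concentrated on a single \emph{deterministic} coordinate $j(k)\in\{1,\dots,k\}$: otherwise either some $X^{(\bullet,k)}$ has two strictly positive entries with positive probability, or its (a.s.\ unique) positive entry can fall in two different positions with positive probability, and in either case two independent parasites in a $k$-splitting cell would contaminate two distinct daughter cells with positive probability, contradicting $\P_{2}(\calT_{1}^{*}\ge 2)=0$. Consequently all parasites hosted by a common cell send their offspring into the same daughter cell, so an immediate induction yields $\calT_{n}^{*}\le 1$ for all $n$, $\P_{z}$-a.s. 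On the other hand, Theorem~\ref{Ext-Exp-Princ} gives $\calZ_{n}\to\infty$ on $\Surv$, hence $\calT_{n}^{*}\ge 1$ for all $n$ on that event; combining the two bounds gives $\calT_{n}^{*}=1$ for all $n\ge 0$, $\P^{*}_{z}$-a.s.

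\emph{Part (a), easy half.} By the branching property $\P(\Ext\mid\calF_{n})=\prod_{\sfv\in\T_{n}^{*}}q_{Z_{\sfv}}$ with $q_{z}:=\P_{z}(\Ext)$, and since deleting a parasite can only favour extinction, $z\mapsto q_{z}$ is nonincreasing; set $q_{\infty}:=\lim_{z\to\infty}q_{z}$, so $q_{\infty}\in[0,\P_{1}(\Ext)]\subseteq[0,1)$. By L\'evy's zero--one law $\P(\Ext\mid\calF_{n})\to\1_{\Ext}$ a.s., in particular $\P(\Ext\mid\calF_{n})\to 0$ on $\Surv$. If $q_{\infty}>0$ we are done at once, since then $q_{\infty}^{\calT_{n}^{*}}\le\P(\Ext\mid\calF_{n})\to 0$ on $\Surv$ forces $\calT_{n}^{*}\to\infty$ there, and the identity $\Ext=\{\sup_{n}\calT_{n}^{*}<\infty\}$ follows because on $\Ext$ one has $\calZ_{n}\to 0$, hence $\calT_{n}^{*}=0$ eventually.

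\emph{Part (a), hard half: $q_{\infty}=0$.} Here the plan is to combine Theorem~\ref{Ext-Exp-Princ} with the monotone \emph{spreading estimate}: adding parasites to a cell cannot reduce the number of its contaminated daughters, so $z\mapsto\P_{z}(\calT_{1}^{*}\ge 2)$ is nondecreasing and $\P_{z}(\calT_{1}^{*}\ge 2)\ge\alpha:=\P_{2}(\calT_{1}^{*}\ge 2)>0$ for all $z\ge 2$. Fix $m\in\N$, put $E_{m}:=\Surv\cap\{\calT_{n}^{*}\le m\ \text{i.o.}\}$, and aim for $\P_{z}(E_{m})=0$ (which, over all $m$, gives $\P^{*}_{z}(\calT_{n}^{*}\to\infty)=1$). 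On $E_{m}$ we have $\calZ_{n}\to\infty$, so at the infinitely many generations $n$ with $\calT_{n}^{*}\le m$ the fattest contaminated cell carries $\ge\calZ_{n}/m\to\infty$ parasites; hence there are stopping times $\tau_{1}<\tau_{2}<\dots$, finite on $E_{m}$ and with $\tau_{j+1}>\tau_{j}+1$, at which $\calT_{\tau_{j}}^{*}\le m$ and some contaminated cell $\sfw_{j}$ has $Z_{\sfw_{j}}\ge 2$. Since $\P(\sfw_{j}\text{ has}\ge 2\text{ contaminated daughters}\mid\calF_{\tau_{j}})\ge\alpha$, L\'evy's conditional Borel--Cantelli lemma makes this occur for infinitely many $j$; each time it does, $\sfw_{j}$ spawns a contaminated daughter $\sfu_{j}$ off the ``fattest'' lineage, whose sub-population survives with conditional probability $\ge 1-\P_{1}(\Ext)>0$ by the branching property. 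Thinning to a subsequence of the $\sfu_{j}$ with pairwise disjoint subtrees and applying a second Borel--Cantelli step, infinitely many of these offshoot sub-populations survive, each contributing a permanent unit to $\calT_{n}^{*}$, so $\calT_{n}^{*}\to\infty$ on $E_{m}$ --- absurd. As before, $\Ext=\{\sup_{n}\calT_{n}^{*}<\infty\}$ $\P_{z}$-a.s.\ then follows.

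\emph{Main obstacle.} The delicate point in the hard half is that the natural monotone functional --- the number of generation-$n$ contaminated cells possessing an infinite line of parasite descent --- is nondecreasing but \emph{not} $\calF_{n}$-measurable, so the conditional Borel--Cantelli argument cannot be applied to the survival events as stated; this is circumvented by truncating the survival horizon (replace ``survives'' by ``survives $L$ further generations'', conclude, let $L\to\infty$), or equivalently via a spine/$Q$-process decomposition along an immortal parasite line. The remaining care goes into the bookkeeping that extracts pairwise subtree-disjoint offshoots --- and into ruling out the alternative that every offshoot is eventually reabsorbed into one ``main'' lineage, which would exhibit all parasites as concentrating on a single cell line and hence contradict the hypothesis of~(a) through the spreading estimate.
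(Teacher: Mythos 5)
Part (b) of your proposal is correct and is essentially the paper's argument (degeneracy forces all parasites of a cell into one deterministic daughter, whence $\calT_{n}^{*}\le 1$ always and $\ge 1$ on $\Surv$). The first half of your part (a) is also sound: with $q_{z}:=\P_{z}(\Ext)$ nonincreasing and $q_{\infty}:=\lim_{z}q_{z}$, the branching property gives $\P(\Ext\mid\calF_{n})\ge q_{\infty}^{\calT_{n}^{*}}$, and L\'evy's $0$--$1$ law settles the case $q_{\infty}>0$. The case $q_{\infty}=0$ does occur, however, and that is where your sketch has a genuine gap.

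The gap is the second Borel--Cantelli step. The events ``the offshoot $\sfu_{j}$'s subpopulation survives forever'' are tail events, not measurable with respect to any $\calF_{\tau_{j'}}$, so the conditional Borel--Cantelli lemma does not apply to them; you acknowledge this, but the proposed repair does not close it. For fixed $L$, conditional Borel--Cantelli does yield infinitely many offshoots surviving $L$ further generations, but such an offshoot certifies an extra contaminated cell only for $n\in[\tau_{j}+1,\tau_{j}+1+L]$ and may die afterwards, so these certificates never accumulate at a single generation; and ``for every $L$, infinitely many offshoots survive $L$ generations'' does not imply ``infinitely many offshoots survive forever'' (a $\forall L\,\exists^{\infty}j$ versus $\exists^{\infty}j\,\forall L$ interchange). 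The surrounding bookkeeping (``off the fattest lineage'', ruling out ``reabsorption'') is likewise not well defined. The paper avoids all of this by replacing Borel--Cantelli with the strong Markov property of the chain $\calbfT_{n}^{*}$ at the successive return times to the set of configurations with at most $m$ contaminated cells, one of them multiply infected: from any such configuration, with probability at least $\P_{2}(\calT_{1}^{*}\ge 2)\,\P(\calT_{1}^{*}\ge 1)^{m}\,\P(\Surv)^{m+2}>0$ --- a bound uniform over the configuration --- every contaminated cell present founds an immortal contaminated line and the multiply infected one founds two, after which $\calT_{n}^{*}>m$ for all later times. Iterating at the return times gives geometric decay of the probability of $n$ returns, hence finitely many visits to each level; this works irrespective of $q_{\infty}$, so your dichotomy is unnecessary. (The paper handles configurations in which every contaminated cell holds exactly one parasite via transience from Theorem~\ref{Ext-Exp-Princ}; your observation that $\calZ_{n}\to\infty$ eventually forces a multiply infected cell when $\calT_{n}^{*}\le m$ is a legitimate substitute for that step.) If you wish to keep your offshoot picture, the fix is to fold the entire ``some offshoot escapes and survives forever'' statement into a single event whose conditional probability given $\calF_{\tau_{j}}$ is bounded below via the Markov property, rather than running Borel--Cantelli over a sequence of non-adapted survival events.
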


\begin{proof}
Fix any $z\in\N$ and consider first the easier case $(b)$. Note that $\P_{2}(\calT_{1}^{*}\ge 2)=0$ implies $\P_{z}(\calT_{n}^{*}\le 1\ \forall\ n\ge 0)=1$. But since $\Surv = \{\calT_{n}^{*}\ge 1\text{ f.a. }n\ge 0\}$ $\P_{z}$-a.s., $(b)$ is proved.

\vspace{.1cm}
For the proof of $(a)$, we use the Markov chain $\bfTprocStar$ to show that $\calTprocStar$ visits each $t\ge 1$ only finitely often, that is
\begin{equation}\label{extinction explosion for T_{n}^*}
 \P_{z}(1\le  \calT_{n}^{*}\le  t\ \text{infinitely often})=0
\end{equation}
for all $t\ge 1$, hence $\P_{z}(\lim_{n\to\infty}\calT_{n}^{*}=0\text{ or }\infty)=1$. But since $\Ext=\{\calT_{n}^{*}\to 0\}$ $\P_{z}$-a.s., $(a)$ follows.

\vspace{.1cm}
Left with the proof of \eqref{extinction explosion for T_{n}^*}, we define (with $\bfN$ given by \eqref{def of bfN})
\begin{equation*}
 A_{k}\ :=\ \left\{ \vec{s}\in\bfN \ \Big|\ \sum_{i\ge 1}s_{i}=k,\ s_{1}\ne k\right\}\ \subseteq\ \bfN
\end{equation*}
for $k\ge 1$ and observe that, for $n\ge 0$,
\begin{equation*}
\{\calT_{n}^{*}=k\}\ =\ \{\calbfT_{n}^{*}\in A_{k}\}\cup\{\calbfT_{n}^{*}\ =\ (k,0,0,\dots)\}\quad\P_{z}\text{-a.s.}%\{\calT_{n}^{*}=t,\ Z_{\sfv}=1\ \forall\ v\in\T_{n}^{*}\}\subseteq\{\BPG_{n}\in A_{k}\}\cup\{\calZ_{n}=t\}\ \;\P_{z}\text{-a.s.}
\end{equation*}
% Since $\lim_{n\to\infty}\P_{z}(\calZ_{n}=t)=0$ (see Theorem \ref{Ext-Exp-Princ}), we get
Since $(k,0,0,\dots)\in \bfN$ is a transient state by Theorem \ref{Ext-Exp-Princ}, we obtain
\begin{equation*}
\P_{z}(\calT_{n}^{*}=k\ \text{infinitely often})\ =\ \P_{z}(\calbfT_{n}^{*}\in A_{k}\ \text{infinitely often}).
\end{equation*}
Therefore, it remains to prove that the Markov chain $\bfTprocStar$ visits the set $A_{k}$ only finitely often with probability 1. For $\vec{s}\in A_{k}$, i.e. $\sum_{i}s_{i}=k$ and $s_{j}>1$ for some $j>1$, we infer with the help of the branching property %of $\BT$ (see Proposition \ref{Prop.MarkovChainIndexedByV})
\begin{align*}
\P_{\vec{s}}&\left(\calbfT_{n}^{*}\notin A_{k}\ \text{for all}\ n\ge 1\right)\\
&\ge\ \P_{\vec{s}}\left(\calT_{n}^{*}>k\ \text{for all}\ n\ge 1\right) \\
&\ge\ \P_{j}\left(\calT_{1}^{*}\ge 2\right)\,\P(\Surv)^{2}\prod_{i:s_{i}>0}\big(\P_{i}\left(\calT_{1}^*\ge 1\right)\P(\Surv)\big)^{s_{i}}\\
&\ge\ \P_{2}\left(\calT_{1}^{*}\ge 2\right)\,\P\left(\calT_{1}^*\ge 1\right)^{k}\P(\Surv)^{k+2}
\end{align*}
and positivity of the last expression is guaranteed by our assumptions. Notice also that this expression is independent of the choice of $\vec{s}$. Next, let $\tau_{0}:=0$ and
\begin{equation*}
\tau_{n+1}:=\inf\left\{m>\tau_{n}|\calbfT^*_m\in A_{k}\right\}
\end{equation*}
for $n\ge 0$ be the successive return times of $\bfTprocStar$ to the set $A_{k}$, where as usual $\inf\emptyset:=\infty$ and $\tau_{n+1}=\infty$ if $\tau_{n}=\infty$. Then the previous inequality and the strong Markov property of $\bfTprocStar$ imply the existence of a constant $c<1$ such that \emph{for all} $\vec{s}\in A_{k}$ and $n\ge 0$
\begin{align*}
 \P_{z}\left(\tau_{n+1}-\tau_{n}<\infty|\calbfT_{\tau_{n}}^{*}=\vec{s}, \tau_{n}<\infty\right)
\ &=\ \P_{\vec{s}}\left(\tau_{1}<\infty\right)\ \le\ c\ <\ 1.
\end{align*}
Therefore, we infer upon iteration that
\begin{align*} 
\P_{z}&(\tau_{n}<\infty)\\
&=\ \sum_{\vec{s}\in A_{k}}\P_{z}(\calbfT_{\tau_{n-1}}^{*}=\vec{s}, \tau_{n}-\tau_{n-1}<\infty, \tau_{n-1}<\infty)\\
&=\sum_{\vec{s}\in A_{k}}\P_{z}(\tau_{n}-\tau_{n-1}<\infty|\calbfT_{\tau_{n-1}}^{*}=\vec{s}, \tau_{n-1}<\infty)\,\P_{z}(\calbfT_{\tau_{n-1}}^{*}=\vec{s}, \tau_{n-1}<\infty)\\
% &=\sum_{(s,z)\in A_{k}}\P_{(s,z)}(\tau_{1}<\infty)\P(\BPG_{\tau_{m-1}}=(s,z), \tau_{1}<\infty,\dots,\tau_{m-1}<\infty)\\
&\le\ c\,\P_{z}(\tau_{n-1}<\infty)\ \le\ \ldots\ \le\ c^{n-1}\P_{z}(\tau_{1}<\infty)\ \le\ c^{n-1}
\end{align*}
for each $n\ge 1$ and thereupon
\begin{align*}
\P_{z}\left(\calbfT_{n}^{*}\in A_{k}\ \text{infinitely often}\right)\ &=\ \P_{z}\left(\tau_{n}<\infty\text{ for all }n\ge 1\right)\\
&=\ \P_{z}\left(\bigcap_{n\ge 1}\{\tau_{n}<\infty\}\right)\\
&=\ \lim_{n\to\infty}\P_{z}(\tau_{n}<\infty)\ \le\ \lim_{n\to\infty}c^{n-1}\ =\ 0
\end{align*}
which completes the proof.\qed
\end{proof}

So we have verified an extinction-explosion dichotomy for both, the process of contaminated cells and of parasites, and will now proceed with the main result in this paper which provides equivalent conditions for almost sure extinction of parasites. The proof will make use of the ABPRE introduced in Section \ref{Sec.ABPRE}.

\begin{Theorem}\label{Th.FastSicheresAussterben}\
\begin{description}[(b)]
\item[(a)] If $\P_{2}(\calT_{1}^{*}\ge 2)=0$, then $\P(\Ext)=1$ if, and only if,
\begin{equation*}
%  			\E\left(\log \sum_{u=1}^T\E\left(X^{(u,T)}|T\right)\right)\le 0\quad \text{or}\quad \E\log^{-}\P\left(\sum_{u=1}^TX^{(u,T)}>0\ \bigg|\ T\right)=\infty.%\nu\le 1\quad\text{or}\quad
\E\log \E\left(\calZ_{1}|N_{\varnothing}\right)\le 0\quad \text{or}\quad \E\log^{-}\P\left(\calZ_{1}>0|N_{\varnothing}\right)=\infty.%
\end{equation*}
\item[(b)] If $\P_{2}(\calT_{1}^{*}\ge 2)>0$, then the following statements are equivalent:
		\begin{enumerate}%[label=\textit{(\roman*)}]
 		\item[(i)] $\P(\Ext)=1$.\vspace{.08cm}
		\item[(ii)] $\E\calT_{n}^{*}\le 1$ for all $n\in\N_0$.\vspace{.08cm}
		\item[(iii)] $\sup_{n\in\N_0}\E\calT_{n}^{*}<\infty$.\vspace{.08cm}
		\item[(iv)] $\nu\le 1$, or
			\begin{equation*}
			\nu>1,\quad\E\log g_{\Lambda_{0}}'(1)<0\quad\text{and}\quad\inf_{0\le \theta
			\le 1}\E g_{\Lambda_{0}}'(1)^{\theta}\le  \frac{1}{\nu}.
			\end{equation*}
		\end{enumerate}
\end{description}
\end{Theorem}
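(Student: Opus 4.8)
The plan is to treat the two parts separately, using the ABPRE from Section \ref{Sec.ABPRE} and Proposition \ref{Prop.ABPRE.T} as the bridge between parasite counts in the cell tree and a one-dimensional BPRE. For part (a), the hypothesis $\P_{2}(\calT_{1}^{*}\ge 2)=0$ means every contaminated cell produces at most one contaminated daughter cell, so by Theorem \ref{Th.ExplosionInfZellen}(b) the set of contaminated cells in each generation, on $\Surv$, eventually reduces to a single cell; tracing this unique contaminated line and noting it must coincide (in law) with the spine line $(V_{n})_{n\ge 0}$ once parasites persist, the number of parasites along it is exactly the ABPRE $\assBPRE$. Hence $\P(\Ext)=1$ for the BwBP if and only if the ABPRE dies out a.s., and the latter is governed by the classical Smith--Wilkinson/Athreya--Karlin criterion \eqref{Eq.BPRE.Aussterben}: extinction a.s. iff $\E\log g_{\Lambda_{0}}'(1)\le 0$ or $\E\log^{-}(1-g_{\Lambda_0}(0))=\infty$. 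It then remains to rewrite these two conditions in the stated form. Since $\Lambda_0=\calL(X^{(C_0,T_0)})$ with $\P(T_0=k)=kp_k/\nu$ and $C_0$ uniform on $\{1,\dots,k\}$ given $T_0=k$, one checks $g_{\Lambda_0}'(1)=\sum_{j=1}^{T_0}\mu_{j,T_0}$-type quantities; but here, because $\P_{2}(\calT_{1}^{*}\ge 2)=0$, exactly one $j=j(k)$ carries all the mass, so $g_{\Lambda_0}'(1)$ as a function of the environment is $\E(\calZ_1\mid N_\varnothing)$ evaluated at $N_\varnothing=T_0$ weighted appropriately, and $1-g_{\Lambda_0}(0)=\P(X^{(j(k),k)}>0)=\P(\calZ_1>0\mid N_\varnothing=k)$; matching the size-biasing factor $p_k/\nu$ against $p_k$ only changes the log by the integrable constant $\log\nu$, so the inequality $\E\log g_{\Lambda_0}'(1)\le 0$ becomes $\E\log\E(\calZ_1\mid N_\varnothing)\le 0$ and the second becomes $\E\log^{-}\P(\calZ_1>0\mid N_\varnothing)=\infty$, as claimed.

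For part (b) the scheme is to prove the cycle of implications (i) $\Rightarrow$ (iii) $\Rightarrow$ (ii) $\Rightarrow$ (iv) $\Rightarrow$ (i), or a convenient reordering. The step (ii) $\Rightarrow$ (iii) is trivial, and (iii) $\Rightarrow$ (iv) comes from Proposition \ref{Prop.ABPRE.T} and the Geiger et al. asymptotics: $\nu^{-n}\E\calT_n^{*}=\P(Z_n'>0)$, so $\sup_n\E\calT_n^{*}<\infty$ forces $\P(Z_n'>0)\le C\nu^{-n}$; if $\nu\le 1$ there is nothing to prove, and if $\nu>1$ this exponential decay of the BPRE survival probability forces subcriticality $\E\log g_{\Lambda_0}'(1)<0$ (a critical or supercritical BPRE has survival probability bounded below or decaying only polynomially), and then the rate identity $\P(Z_n'>0)\asymp n^{-\kappa}(\inf_{0\le\theta\le1}\E g_{\Lambda_0}'(1)^{\theta})^n$ from \eqref{Eq.GeigerSurvivalEstimate}, cf. \eqref{Eq.TnAndGeiger2}, yields $\inf_{0\le\theta\le1}\E g_{\Lambda_0}'(1)^{\theta}\le 1/\nu$. (One must handle the passage from the truncated assumption \eqref{As.GeigerSubcriticalLimit} to the general case by a truncation/monotonicity argument, or invoke the relaxed hypotheses of \cite{GeKeVa:03, Vatutin:04}.) For (iv) $\Rightarrow$ (i): if $\nu\le 1$ the cell tree $\T$ is a.s. finite (or a single line) so only finitely many parasites ever appear and, by the extinction--explosion principle (Theorem \ref{Ext-Exp-Princ}), $\P(\Ext)=1$; if $\nu>1$ with $\E\log g_{\Lambda_0}'(1)<0$ and $\inf_\theta\E g_{\Lambda_0}'(1)^\theta\le 1/\nu$, then $\E\calT_n^{*}=\nu^n\P(Z_n'>0)\to 0$ by \eqref{Eq.TnAndGeiger}, whence $\P(\calT_n^{*}\ge 1)\le\E\calT_n^{*}\to 0$ along a subsequence forces $\liminf\calT_n^{*}=0$ a.s., and Theorem \ref{Th.ExplosionInfZellen}(a) then gives $\Ext=\{\sup_n\calT_n^{*}<\infty\}$ a.s.\ and in fact $\P(\Ext)=1$. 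Finally (i) $\Rightarrow$ (ii): on $\Ext$, summing $\E\calT_n^{*}=\nu^n\P(Z_n'>0)$, if some $\E\calT_{n_0}^{*}>1$ one can bootstrap via the branching/Markov structure of $\calbfT_n^{*}$ to get $\liminf\E\calT_n^{*}>0$ together with a positive probability of $\calT_n^{*}\to\infty$, contradicting $\P(\Ext)=1$ in view of Theorem \ref{Th.ExplosionInfZellen}(a); more cleanly, $\P(\Ext)=1$ forces $\P(Z_n'>0)\to0$, hence $\nu^{-1}=\lim(\E\calT_{n+1}^{*}/\E\calT_n^{*})$-type comparisons via submultiplicativity of $n\mapsto\E\calT_n^{*}$ (which holds because $\calbfT^{*}$ is a MTBP) give $\E\calT_n^{*}\le(\E\calT_1^{*})^n$ and, combined with the rate, $\E\calT_1^{*}\le 1$, then submultiplicativity upgrades this to $\E\calT_n^{*}\le1$ for all $n$.

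The main obstacle I expect is the analytic input (iii) $\Rightarrow$ (iv): extracting from the mere boundedness of $\E\calT_n^{*}=\nu^n\P(Z_n'>0)$ both the strict subcriticality of the ABPRE \emph{and} the sharp spectral-radius inequality $\inf_{0\le\theta\le1}\E g_{\Lambda_0}'(1)^\theta\le1/\nu$ requires the precise polynomial-times-geometric asymptotics of subcritical BPRE survival probabilities, and the clean form \eqref{Eq.GeigerSurvivalEstimate} is only proved under the boundedness assumption \eqref{As.GeigerSubcriticalLimit}; removing it (our $X^{(j,k)}$ need not be bounded, and $g_{\Lambda_0}'(1)$ may take arbitrarily small positive values) demands either a truncation argument comparing the BwBP with a dominated bounded version, or a direct large-deviation lower bound $\P(Z_n'>0)\ge e^{-o(n)}(\inf_\theta\E g_{\Lambda_0}'(1)^\theta)^n$ valid in the general subcritical regime. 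The remaining implications are comparatively routine given Theorems \ref{Ext-Exp-Princ} and \ref{Th.ExplosionInfZellen} and Proposition \ref{Prop.ABPRE.T}; the other delicate point, already flagged in part (a), is making rigorous the identification of the unique surviving contaminated cell line with the size-biased spine, which rests on the observation that conditionally on the event that contamination is confined to one line, the distribution of that line's successive local structures is precisely the size-biased one used to define $(V_n)_{n\ge0}$.
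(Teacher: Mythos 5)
Your part (a) contains a genuine error. You route the argument through the ABPRE and claim that the unique contaminated cell line coincides in law with the size-biased spine $(V_{n})_{n\ge 0}$, so that extinction of the BwBP is equivalent to a.s.\ extinction of $\assBPRE$. Both claims are false: the contaminated line is determined by the (non-size-biased) cell splitting, so the daughter-cell numbers along it are iid with law $(p_{k})_{k\ge0}$, not $(kp_{k}/\nu)_{k\ge0}$; and in the degenerate case the ABPRE's environment equals $\delta_{0}$ with probability $\sum_{k}(k-1)p_{k}/\nu>0$ whenever $\P(N\ge2)>0$, so $\E\log^{-}(1-g_{\Lambda_{0}}(0))=\infty$ and the ABPRE dies out a.s.\ even when the BwBP survives (take $X^{(1,k)}=2$ a.s., $X^{(j,k)}=0$ for $j\ge2$, $N\equiv2$). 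The ``only changes the log by the integrable constant $\log\nu$'' step is not a valid repair, since size-biasing changes the averaging weights, not an additive constant. The paper's (correct) route is direct: under $\P_{2}(\calT_{1}^{*}\ge2)=0$ the process $\calZproc$ is itself a BPRE whose iid environment is the sequence of daughter-cell numbers of the unique contaminated cells, distributed as $N_{\varnothing}$, and the Smith--Wilkinson criterion gives exactly the stated condition; no spine is involved.

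In part (b) your overall scheme (Proposition \ref{Prop.ABPRE.T}, the Geiger et al.\ asymptotics with an $\alpha(M)$-truncation, Fatou plus Theorem \ref{Th.ExplosionInfZellen}, and an embedded supercritical GWP) matches the paper, but two steps as written do not go through. First, your ``$(iv)\Rightarrow(i)$'' for $\nu>1$ asserts $\E\calT_{n}^{*}\to0$ from \eqref{Eq.TnAndGeiger}; in the strongly subcritical boundary case $\kappa=0$ with $\inf_{0\le\theta\le1}\E g_{\Lambda_{0}}'(1)^{\theta}=1/\nu$ the right-hand side tends to $c>0$, so this fails (and \eqref{Eq.TnAndGeiger} needs \eqref{As.GeigerSubcriticalLimit} anyway). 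The paper handles this regime separately (its Case A): when $\E g_{\Lambda_{0}}'(1)\log g_{\Lambda_{0}}'(1)\le0$, the infimum is attained at $\theta=1$, giving $\gamma\le1$ and hence $\E\calT_{n}^{*}\le\E\calZ_{n}=\gamma^{n}\le1$, after which Fatou and Theorem \ref{Th.ExplosionInfZellen}(a) yield extinction. Second, your ``cleaner'' version of $(i)\Rightarrow(ii)$ via submultiplicativity of $n\mapsto\E\calT_{n}^{*}$ is wrong in direction: since $\E_{k}\calT_{m}^{*}\ge\E_{1}\calT_{m}^{*}$ for every $k\ge1$, this sequence is \emph{super}multiplicative, so $\E\calT_{n}^{*}\le(\E\calT_{1}^{*})^{n}$ is unavailable; the correct argument (and the paper's) is the comparison with a supercritical GWP with offspring law $\calL(\calT_{m}^{*})$ when $\E\calT_{m}^{*}>1$. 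Relatedly, your exclusion of the critical/supercritical ABPRE in $(iii)\Rightarrow(iv)$ rests on the unproved assertion that such BPREs have at most polynomially decaying survival probability; the paper instead uses Jensen's inequality ($\E\log g_{\Lambda_{0}}'(1)\ge0$ forces $\inf_{\theta}\E g_{\Lambda_{0}}'(1)^{\theta}=1>1/\nu$) together with an $\alpha$-coupling whose truncated ABPRE is subcritical, satisfies \eqref{As.GeigerSubcriticalLimit}, and still has $\inf_{\theta}\E g_{\alpha,\Lambda_{0}}'(1)^{\theta}>1/\nu$, which contradicts boundedness of $\E\calT_{n}^{*}$ by monotonicity. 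Your truncation remarks point in this direction, but the argument needs to be carried out in this form rather than via the decay-rate claim.
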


\begin{proof}
(a) If $\P_{2}(\calT_{n}^{*}\ge 2)=0$, then for all $k\ge 1$ with $p_{k}>0$ there exists at most one $1\le j\le k$ such that $\P(X^{(j,k)}>0)>0$, see before Theorem \ref{Th.ExplosionInfZellen}. As a consequence, $\calZproc$ is a branching process in random environment, the latter given by the iid numbers of daughter cells produced by the unique cells which contain the parasites (their common law being the law of $N_{\varnothing}$). Hence, $\calZproc$ dies out almost surely if, and only if, $\E(\log \E(\calZ_{1}|N_{\varnothing}))\le 0$ or $\E\log^{-}\P(\calZ_{1}>0|N_{\varnothing})=\infty$ (see e.g. \cite{Smith+Wilkinson:69}).

\vspace{.2cm}
(b) Suppose now $\P_{2}(\calT_{1}^{*}\ge 2)>0$.

\vspace{.1cm}
``$(i)\Rightarrow(ii)$'' (by contraposition) Fix $m\in\N$ such that $\E\left(\calT_{m}^{*}\right)>1$ and consider a supercritical GWP $(S_{n})_{n\ge 0}$ with $S_0=1$ and offspring distribution
\begin{equation*}
\P(S_{1}=k)\ =\ \P(\calT_{m}^{*}=k),\quad k\in\N_{0}.
\end{equation*}
Obviously,
\begin{equation*}
 	\P(S_{n}>k)\ \le\ \P(\calT_{nm}^{*}>k)
\end{equation*}
for all $k,n\in\N_0$, hence 
\begin{equation*}
 \lim_{n\to\infty}\P(\calT_{nm}^{*}>0)\ \ge\ \lim_{n\to\infty}\P(S_{n}>0)>0,
\end{equation*}
i.e. parasites survive with positive probability.

\vspace{.1cm}
``$(ii)\Rightarrow(iii)$'' is trivial.

\vspace{.1cm}
``$(iii)\Rightarrow(i)$''
Recall that $\liminf_{n\to\infty}\calT_{n}^{*}=\infty$ a.s. on $\Surv$ by Theorem \ref{Th.ExplosionInfZellen}. On the other hand, $\sup_{n\ge 0}\E\calT_{n}^{*}<\infty$ implies
\begin{equation*}
 	\infty\ >\ \liminf_{n\to\infty} \E\calT_{n}^{*}\ \ge\ \E\left(\liminf_{n\to\infty}\calT_{n}^{*}\right)
\end{equation*}
by Fatou's lemma so that $\P(\Surv)=0$.

\vspace{.2cm}
``$(iv)\Rightarrow(i),(ii)$''
If $\nu\le 1$, then $\E\calT_{n}^{*}\le \E\calT_{n}=\nu^{n}\le  1$ for all $n\in\N$. So let us consider the situation when
\begin{equation*}
\nu>1,\quad\E\log g_{\Lambda_{0}}'(1)<0\quad\text{and}\quad\inf_{0\le \theta\le 1}\E g_{\Lambda_{0}}'(1)^{\theta}\le \frac{1}{\nu} 
\end{equation*}
is valid. It is here where the ABPRE $(Z_{n}')_{n\ge 0}$ comes into play. By \eqref{Eq.BPRE.EG},
\begin{equation*}%\label{Eq.Extinction.SC}
 	\E\calT_{n}^{*}\ =\ \nu^{n}\,\P(Z^\prime_{n}>0)
\end{equation*}
for all $n\in\N$. We distinguish three cases:

\vspace{.2cm}
\textsc{Case A}. $\E g_{\Lambda_{0}}'(1)\log g_{\Lambda_{0}}'(1)\le 0$.\vspace{.1cm}

By \eqref{Eq.EW.BPRE} and what has been pointed out in the short review of BPRE's at the end of the previous section, we then infer
\begin{equation*}
 	\frac{\gamma}{\nu}\ =\ \E g_{\Lambda_{0}}'(1)\ =\ \inf_{0\le \theta\le 1}\E 
	g_{\Lambda_{0}}'(1)^{\theta}\le\frac{1}{\nu}
\end{equation*}
and thus $\gamma\le 1$, which in turn entails
\begin{equation*}
	\E\calT_{n}^{*}\ \le\ \E\calZ_{n}\ =\ \gamma^{n}\ \le\  1
\end{equation*}
for all $n\in\N$ as claimed.

\vspace{.2cm} 
\textsc{Case B}. $\E g_{\Lambda_{1}}'(1)\log g_{\Lambda_{1}}'(1)>0$ and \eqref{As.GeigerSubcriticalLimit}. 

\vspace{.1cm}
Then, by \eqref{Eq.GeigerSurvivalEstimate},
\begin{equation*}
 	\P(Z_{n}^\prime>0)\ \simeq\ c n^{-3/2} \left(\inf_{0\le \theta\le  1}
	\E g_{\Lambda_{1}}'(1)^{\theta}\right)^{n}\quad\text{as } n\to\infty
\end{equation*}
holds true for a suitable constant $c\in (0,\infty)$ whence, by Fatou's lemma,
\begin{equation*}
	0\ =\ \lim_{n\to\infty} \nu^{n}\,\P(Z^\prime_{n}>0)\ =\ \liminf_{n\to\infty} \E\calT_{n}^{*}\ \ge\ \E\left(\liminf_{n\to\infty}\calT_{n}^{*}\right).
\end{equation*}
Consequently, $\P(\Surv)=0$ since $\inf_{n\ge 0}\calT_{n}^{*}\ge 1$ a.s. on $\Surv$.

\vspace{.2cm} 
\textsc{Case C}. $\E g_{\Lambda_{1}}'(1)\log g_{\Lambda_{1}}'(1)>0$.

\vspace{.1cm}
Using contraposition, suppose $\sup_{n\in\N}\E\calT_{n}^{*}>1$ and fix an arbitrary vector $\alpha=(\alpha^{(j,k)})_{1\le  j\le k<\infty}$ of distributions on $\N_{0}$ satisfying
\begin{equation*}
 \alpha_{x}^{(j,k)}\ \le \ \P\left(X^{(j,k)}=x\right)
\end{equation*}
for $x\ge 1$ and $j,k$ as stated, hence
\begin{equation*}
 \alpha_{0}^{(j,k)}\ \ge \ \P\left(X^{(j,k)}=0\right)\quad\text{and}\quad\sum_{x\ge  n}\alpha_{x}^{(j,k)}\ \le \ \P\left(X^{(j,k)}\ge  n\right) 
\end{equation*}
for each $n\ge  0$. Possibly after enlarging the underlying probability space, we can then construct a BwBP $(N_{\sfv}, Z_{\alpha,\sfv}, X^{(\bullet,k)}_{\alpha,i,\sfv})_{\sfv\in\V, i,k\in\N}$ coupled with and of the same kind as the original BwBP such that
\begin{align*}
\P\left(X^{(j,k)}_{\alpha,i,\sfv} = x \right)\ =\ \alpha_{x}^{(j,k)}\quad\text{and}\quad X^{(j,k)}_{\alpha, i,\sfv}\ \le \ X^{(j,k)}_{i,\sfv}\quad\text{a.s.}
\end{align*}
for all $1\le  j\le k<\infty$, $\sfv\in\V$, $i\ge 1$ and $x\ge  1$.
Then $Z_{\alpha,\sfv}\le  Z_{\sfv}$ a.s.\ for all $\sfv\in\V$ and since the choice of $\alpha$ has no affect on the cell splitting process, we have $\nu_{\alpha}=\nu>1$ and thus for $\theta\in[0,1]$
\begin{align}
\begin{split}\label{Eq.GestutzterProzess.G}
\E g_{\alpha,\Lambda_{0}}'(1)^{\theta}\ &=\ \E\left(\E(Z^\prime_{\alpha,1}|\Lambda_0)^{\theta}\right) \\
&=\ \sum_{1\le  j\le k<\infty}\frac{p_{k}}{\nu}\left(\E X^{(j,k)}_{\alpha}\right)^\theta\\
&\le\ \sum_{1\le  j\le k<\infty}\frac{p_{k}}{\nu}\mu_{j,k}^\theta\ \le\ \E g_{\Lambda_{0}}'(1)^{\theta}
\end{split}
\end{align}
where $\nu_{\alpha}$, $Z^\prime_{\alpha,n}$, $X^{(j,k)}_{\alpha}$ and $g_{\alpha,\Lambda_{0}}$ have the obvious meaning. Recalling $\mu_{j,k}=\E X^{(j,k)}$,  a similar calculation as in \eqref{Eq.GestutzterProzess.G} leads to
\begin{equation}\label{Eq.Gestutze.gf}
\E\log g_{\alpha,\Lambda_{0}}'(1)\ \le\ \E\log g_{\Lambda_{0}}'(1)\ <\ 0.
\end{equation}
We are now going to specify suitable $\alpha$ to complete our argument.

\vspace{.1cm}
For $M\in\N$ let $\alpha(M)=(\alpha^{(j,k)}(M))_{1\le  j\le k<\infty}$ be the vector defined by
\begin{equation*}
\alpha_{x}^{(j,k)}(M)\ :=\  
\begin{cases}
\P\left(X^{(j,k)} = x \right), &\text{if $1\le x\le M$},\\
\hfill 0, &\text{if $x>M$},
\end{cases}
\end{equation*}
if $\mu_{j,k}\ge 1/M$, and $\alpha_0^{(j,k)}=1$ if $\mu_{j,k}<1/M$. Then the BwBP with truncation $\alpha(M)$ satisfies the condition \eqref{As.GeigerSubcriticalLimit}, and we can fix $M\in\N$ such that $\sup_{n\in\N}\E\calT_{\alpha(M),n}^{*}>1$, because $\calT_{\alpha(M),n}^{*}\uparrow\calT_{n}^{*}$ as $M\to\infty$. Then, by what has already been proved under \textsc{Case B} in combination with \eqref{Eq.GestutzterProzess.G}, \eqref{Eq.Gestutze.gf} and $\nu_{\alpha(M)}>1$, we infer
\begin{equation}\label{Eq.AbschätzungInfGF>nu}
\inf_{0\le \theta\le 1}\E g_{\Lambda_{0}}'(1)^{\theta}\ \ge\ \inf_{0\le \theta\le 1}\E g_{\alpha(M),\Lambda_{0}}'(1)^{\theta}\ >\ \frac{1}{\nu}
\end{equation}
which contradicts $(iv)$.

\vspace{.2cm} 
``$(ii)\Rightarrow(iv)$''
Suppose that $\E\calT_{n}^{*}\le  1$ for all $n\in\N_0$ and further $\nu>1$ which, by 
\eqref{Eq.BPRE.EG}, entails $\lim_{n\to\infty}\P(Z^\prime_{n}>0)=0$, thus $\E\log g_{\Lambda_{0}}'(1)\le 0$ or $\E\log^{-}(1-g_{\Lambda_0}(0))=\infty$. We must show that $\E\log g_{\Lambda_{0}}'(1)<0$ and $\inf_{0\le \theta\le 1}\E g_{\Lambda_{0}}'(1)^{\theta}\le\nu^{-1}$. 

\vspace{.1cm}
Assuming $\E\log g_{\Lambda_{0}}'(1)<0$, the second condition follows from \eqref{Eq.TnAndGeiger2} if \eqref{As.GeigerSubcriticalLimit} is valid. Dropping the latter condition, suppose that $\inf_{0\le \theta\le 1}\E g_{\Lambda_{0}}'(1)^{\theta}>\nu^{-1}$. Then we can find a $M\ge 1$ and construct a suitable ``$\alpha(M)$-coupling'' as described above such that \eqref{Eq.AbschätzungInfGF>nu} holds. But since \eqref{As.GeigerSubcriticalLimit} is fulfilled for the truncated process, we arrive at the contradiction
\begin{equation*}
\sup_{n\in\N}\E\calT_{n}^{*}\ \ge\ \sup_{n\in\N}\E\calT_{\alpha(M),n}^{*}\ >\ 1
\end{equation*}
by referring to \eqref{Eq.Gestutze.gf} and by what has already been established for a BwBP with a subcritical ABPRE, i.e. $\E\log g_{\Lambda_{0}}'(1)<0$. Thus $\inf_{0\le \theta\le 1}\E g_{\Lambda_{0}}'(1)^{\theta}\le\nu^{-1}$ holds even if \eqref{As.GeigerSubcriticalLimit} fails. 

\vspace{.1cm}
It remains to rule out that $\E\log g_{\Lambda_{0}}'(1)\ge 0$. Assuming the latter, we find with the help of Jensen's inequality that
\begin{equation*}
\inf_{0\le \theta\le  1}\log \E g_{\Lambda_{0}}'(1)^{\theta}\ \ge\ \inf_{0\le \theta\le 1}\theta \,\E\log g_{\Lambda_{0}}'(1)\ \ge\  0
\end{equation*}
or, equivalently,
\begin{equation*}
	\inf_{0\le \theta\le 1}\E g_{\Lambda_{0}}'(1)^{\theta}\ \ge\ 1\ >\ \frac{1}{\nu}
\end{equation*}
(which implies $\inf_{0\le \theta\le 1}\E g_{\Lambda_{0}}'(1)^{\theta}=1$).
Use once more a suitable ``$\alpha$-coupling'' (not necessarily of the previous kind) and fix $\alpha$ in such a way that
\begin{equation*}
1\ =\ \inf_{0\le \theta\le 1}\E g_{\Lambda_{0}}'(1)^{\theta}\ >\ \inf_{0\le \theta\le 1}\E g_{\alpha,\Lambda_{0}}'(1)^{\theta}\ >\ \frac{1}{\nu},
\end{equation*}
which implies subcriticality of the ABPRE $(Z^\prime_{\alpha,n})_{n\ge 0}$ by taking the logarithm and using Jensen's inequality. As above, we thus arrive at the contradiction
\begin{equation*}
\sup_{n\in\N}\E\calT_{n}^{*}\ \ge\ \sup_{n\in\N}\E\calT^*_{\alpha,n}\ >\ 1
\end{equation*}
by using the already established results for a BwBP with subcritical ABPRE. This completes the proof of $(b)$.\qed
\end{proof}

\section*{Glossary}\label{sec:glossary}
\begin{description}[xxxxxxxxxx]
\item[$(p_{k})_{k\ge 0}$] offspring distribution of the cell population
\item[$\T$] cell tree in Ulam-Harris labeling
\item[$\T_{n}$] subpopulation at time (generation) $n$ $[=\{\sfv\in\T:|\sfv|=n\}]$
\item[$N_{\sfv}$] number of daughter cells of cell $\sfv\in\T$
\item[$\calT_{n}$] cell population size at time $n$ $[=\#\T_{n}=\sum_{|\sfv|=n}N_{\sfv}]$
\item[$\calT_{n,k}$] number of cells at time $n$ containing $k$ parasites
\item[$\calbfT_{n}$] the infinite vector $(\calT_{n,0},\calT_{n,1},...)$
\item[$\calbfT_{n}^{*}$] the infinite vector $(\calT_{n,1},\calT_{n,2},...)$
\item[$Z_{\sfv}$] number of parasites in cell $\sfv$
\item[$\calZ_{n}$] number of parasites at time $n$ $[=\sum_{\sfv\in\T_{n}}Z_{\sfv}]$
\item[$\T_{n}^{*}$] population of contaminated cells at time $n$ $[\{\sfv\in\T_{n}:Z_{\sfv}>0\}]$
\item[$\calT_{n}^{*}$] number of contaminated cells at time $n$ $[=\#\T_{n}^{*}]$
\item[$X^{(\bullet,k)}_{i,\sfv}$] given that the cell $\sfv$ has $k$ daughter cells $\sfv 1,...,\sfv k$, the $j^{\,th}$ component $X^{(j,k)}_{i,\sfv}$ of this $\N_{0}^{k}$-valued random vector gives the number of offspring of the $i^{\,th}$ parasite in $\sfv$ which is shared into daughter cell $\sfv j$.
\item[$X^{(\bullet,k)}$] generic copy of the $X^{(\bullet,k)}_{i,\sfv}$, $i\in\N,\,\sfv\in\V$ with components $X^{(j,k)}$
\item[$\mu_{j,k}$] $=\E X^{(j,k)}$
\item[$\gamma$] mean number of offspring per parasite $[=\E\calZ_{1}=\sum_{k\ge 1}p_{k}\sum_{j=1}^{k}\mu_{j,k}]$
\item[$\nu$] mean number of daughter cells per cell $[=\E N=\sum_{k\ge 1}p_{k}]$
\item[$(V_{n})_{n\ge 0}$] infinite random cell line in $\V$ starting at $V_{0}=\varnothing$.
\item[$(Z_{n}')_{n\ge 0}$] associated branching process in iid random environment (ABPRE) $\Lambda=(\Lambda_{n})_{n\ge 0}$ and a copy of $(Z_{V_{n}})_{n\ge 0}$.
\end{description}
%\vspace{1cm}
%\acknowledgements{}
\bibliographystyle{abbrv}
\bibliography{StoPro}

\end{document}